\documentclass[reqno,10pt]{amsart}

\usepackage{amsmath,amsfonts,amssymb,amsthm,epsfig}

\voffset=-1.5cm \textheight=23cm \hoffset=-.5cm \textwidth=16cm
\oddsidemargin=1cm \evensidemargin=-.1cm
\footskip=35pt \linespread{1.10}
\parindent=20pt

\usepackage[pdftex]{color}
\usepackage{pdfsync}
\usepackage{ifpdf}
\ifpdf
\DeclareGraphicsRule{*}{mps}{*}{}
\fi

%
%

\usepackage{mparhack}
\usepackage[normalem]{ulem}
\usepackage{cancel} 

\definecolor{LightYellow}{cmyk}{0,0,0.4,0}
\definecolor{DarkGreen}{rgb}{0,0.5,0}







\newtheorem{teo}{Theorem}[section]
\newtheorem{prop}[teo]{Proposition}

\newtheorem{lemma}[teo]{Lemma}

\newtheorem{oss}[teo]{Remark}

\newtheorem*{teoS}{Selection Principle}

\newcommand{\compact}{\subset\subset}    
\newcommand{\de}{\partial}

\newcommand{\eps}{\varepsilon}
\newcommand{\e}{\varepsilon}

\newcommand{\mdiv}{\mathop{\mathrm{div}}}

\newcommand{\bary}{{\mathop{\mathrm{bar}}}}

\newcommand{\graph}{\mathop{\mathrm{gr}}}
\newcommand{\sgr}{\mathop{\mathrm{sgr}}}

\newcommand{\esssup}{\mathop{\mathrm{ess~sup}}}
\newcommand{\difsim}{\bigtriangleup}

\newcommand{\devmin}{\Psi}

\DeclareMathOperator*{\excess}{Exc}
\newcommand{\carat}[1]{\chi_{#1}}

\newcommand{\Per}{{P}}

\newcommand{\R}{{\mathbb R}}

\newcommand{\N}{{\mathbb N}}

\newcommand{\Se}{{\mathcal S}}

\newcommand{\Hau}{{\mathcal H}}

\newcommand{\asym}{{\alpha}}
\newcommand{\defP}{{\delta P}}

\newcommand{\QM}{{\mathcal{QM}}}

\newcommand{\impl}{\mathop{\Rightarrow}}

\newcommand{\cyl}{{\mathcal C}}
\newcommand{\xnp}{x_\nu^\perp}

\newcommand{\xenp}{x_{e_n}^\perp}

\newcommand{\xn}{x_\nu}
\newcommand{\xen}{x_{e_n}}




\title[A Selection Principle for the Sharp Quantitative Isoperimetric Inequality]{A Selection Principle for the Sharp Quantitative \\Isoperimetric Inequality}

\author[M.~Cicalese]{Marco Cicalese}
\address{Dipartimento di Matematica e Applicazioni ``R. Caccioppoli'', Universit{\`a} degli Studi di Napoli ``Federico II'', Via Cintia, 
Monte S. Angelo, I-80126 Napoli, Italy}
\email{cicalese@unina.it}

\author[G.P.~Leonardi]{Gian Paolo Leonardi}
\address{Dipartimento di Matematica Pura e Applicata ``G. Vitali'', Universit{\`a} degli Studi di Modena e Reggio Emilia, Via Campi 213/b, I-41100
    Modena, Italy}
\email{gianpaolo.leonardi@unimore.it}

\keywords{Isoperimetric inequality, quasiminimizers of the perimeter}
\subjclass[2000]{52A40 (28A75, 49J45)}


\begin{document}

\begin{abstract}
We introduce a new variational method for the study of stability in the isoperimetric inequality.
The method is quite general as it relies on a penalization technique combined with the regularity theory for
quasiminimizers of the perimeter. Two applications are presented. First we give a new
proof of the sharp 
quantitative isoperimetric inequality in $\R^n$. Second we positively answer to a conjecture by Hall concerning the best constant for the quantitative isoperimetric inequality in $\R^2$ in the small asymmetry regime.
\end{abstract}

\maketitle

\section{Introduction}

Let $E$ be a Borel set in $\R^n$, $n\geq 2$, with positive and finite Lebesgue measure 
$|E|$. Denoting by $B_E$ the open ball centered at $0$ such that $|B_E| = |E|$, and
by $\Per(E)$ the distributional perimeter of $E$ (in the sense of Caccioppoli-De Giorgi), we define 
the \textit{isoperimetric deficit} of $E$ as
\begin{equation*}
  \defP(E) = \frac{\Per(E) - \Per(B_E)}{\Per(B_E)}.
\end{equation*}
By the classical isoperimetric inequality in $\R^n$, $\defP(E)$ is non-negative and zero
if and only if $E$ coincides with $B_E$ up to null sets and
to a translation. A natural issue arising from the optimality of the ball in the isoperimetric inequality, is that of stability estimates of the type
\begin{equation*}
\defP(E)\geq\varphi(E),
\end{equation*}  
where $\varphi(E)$ is a measure of how far $E$ is from a ball.  Such inequalities, called {\it Bonnesen-type inequalities} by Osserman (\cite{Osserman79}), have been widely studied after the results by Bernstein (\cite{Bernstein05}) and Bonnesen (\cite{Bonnesen21, Bonnesen24}) in the convex, $2$-dimensional case  (see also \cite{Hadwiger48} and \cite{Dinghas48} for extensions to convex sets in higher dimensions). Among inequalities of this kind, the well-known {\it quantitative isoperimetric inequality} states that there exists a constant $C = C(n)>0$, such that 
\begin{equation}
  \label{QII}
  \defP(E) \geq C \asym(E)^2,
\end{equation}
where
\begin{equation*}
  \asym(E) = \inf\left\{ \frac{|E\difsim (x+B_E)|}{|B_E|},\ x\in
    \R^n\right\} 
\end{equation*}
and $V\difsim W = (V\setminus W)\cup (W\setminus V)$. We
recall that $\asym(E)$ is known as the {\it Fraenkel asymmetry} of $E$
(see \cite{HalHayWei91}). Observe that both $\defP(E)$ and
$\asym(E)$ are invariant under isometries and dilations. For this
reason, denoting by $B$ the unit open ball in $\R^n$,  in studying \eqref{QII}
we are allowed to restrict ourselves to sets $E$ with $|E|=|B|$.

Before the complete proof of the inequality \eqref{QII} by Fusco,
Maggi and Pratelli \cite{FusMagPra08}, a number of partial results
came one after the other. 
A first stability result outside the convex setting was proved by Fuglede in \cite{Fuglede86} (see also \cite{Fuglede89}), who gave a proof of \eqref{QII} in the class of 
\textit{nearly-spherical} sets in $\R^n$. A set $E$ is \textit{nearly-spherical} in the sense of Fuglede if 
$\de E$ can be represented as the normal graph of a
Lipschitz function $u$ defined on $\de B$ and such that
$\|u\|_{W^{1,\infty}(\de B)}$ is suitably small. More specifically, the following inequality between the
isoperimetric deficit $\defP(E)$ and the Sobolev norm of $u$ is proved in \cite{Fuglede86} under the
assumption that $E$ is nearly-spherical and has the same barycenter as $B$:
\begin{equation}\label{Fug_intro}
 \defP(E)\geq C\| u \|^2_{W^{1,2}(\de B)},
\end{equation}
where $C=C(n)>0$. By \eqref{Fug_intro} one easily obtains \eqref{QII} (see Section \ref{sect:app}). 

A few years later, Hall proved in \cite{Hall92} the inequality \eqref{QII} for sets
with an axis of rotational symmetry (axisymmetric sets). Combining this result 
with a previous estimate obtained in \cite{HalHayWei91}, he was able to prove the quantitative isoperimetric inequality for all
sets in $\R^n$, but with a sub-optimal exponent ($4$ instead of $2$) for the asymmetry.

The full proof of the quantitative isoperimetric inequality (with the
sharp exponent $2$, as conjectured by Hall in \cite{Hall92}) has been
recently accomplished 
by Fusco, Maggi and Pratelli in \cite{FusMagPra08}, via a ingenious geometric construction by which the proof of \eqref{QII} is reduced to sets having more and more symmetries and eventually to axisymmetric sets, for which Hall's result leads to the conclusion.

Since the publication of \cite{FusMagPra08} the study of quantitative forms of various geometric and functional
inequalities has received a new impulse (see for instance \cite{FusMagPra07}, \cite{FusMagPra09}, \cite{FigMagPra09}, \cite{FigMagPra09-2}, \cite{CiaFusMagPra10} and the review paper \cite{Maggi08}). Among the recent results on this subject, the one by
Figalli, Maggi and Pratelli \cite{FigMagPra10} is of particular interest since the authors develop a new technique to study the stability in isoperimetric inequalities. More precisely, they show a more general version of \eqref{QII}, namely a quantitative version of the Wulff theorem, and their analysis relies on Gromov's proof of the isoperimetric inequality \cite{MilSch86} and on the theory of optimal mass transportation.

In this paper we develop a technique that we call \textit{Selection Principle}, which allows us to drastically restrict the class of sets on which the stability for the isoperimetric inequality has to be proved. The Selection Principle basically combines a penalization technique with the regularity theory for
quasiminimizers of the perimeter. We point out that the main ideas of this method work in more general frameworks, but we present them here in a form which is tailored to study the stability for the isoperimetric inequality. Indeed, as a first application of our technique we present a new proof of the sharp quantitative isoperimetric inequality in $\R^n$. 

We start from the simple observation that \eqref{QII} is equivalent to
\begin{equation}\label{DIQQ}
\frac{\defP(E)}{\asym(E)^2}\geq C
\end{equation}
when $\asym(E)>0$ (i.e., when $E$ is not a ball up to null sets). On the other hand, since $\asym(E)<2$, it is enough to show \eqref{DIQQ} under a smallness assumption on $\defP(E)$. This, in turns, translates into a smallness assumption on $\asym(E)$ (see Section \ref{sect:selprin}). Therefore, we only need to estimate from below the left-hand side of \eqref{DIQQ} in the {\it small asymmetry regime}, that is, as $\asym(E)$ gets smaller and smaller. To study the quotient on the left hand side of \eqref{DIQQ} in this regime we introduce the functional $Q$ defined as
\begin{equation*}
  Q(E) = \inf \Big\{ \liminf_k \frac{\defP(F_k)}{\asym(F_k)^2}:\ |F_k|=|E|,\ \asym(F_k)>0,\ |F_k\difsim E|\to 0\Big\}.
\end{equation*}
By the definition of $Q$, the inequality \eqref{DIQQ} in the small asymmetry regime turns out to be equivalent to the inequality 
\begin{equation}\label{QB_intro}
Q(B)>0. 
\end{equation}

The Selection Principle, that we state below, allows us to compute $Q(B)$ as the limit of $Q(E_j)$, as $j\to \infty$, and where $(E_j)_j$ is an ``optimal'' sequence of sets with asymmetry going to zero. More precisely, we prove in Section \ref{sect:selprin} the following result:
\begin{teoS}
There exists a sequence of sets $(E_j)_j$ with the following properties:
\begin{itemize}
\item[(i)] $|E_j| = |B|$, $\asym(E_j)>0$ and $\asym(E_j) \to 0$ as $j\to \infty$;

\item[(ii)] $Q(E_j) \to Q(B)$ as $j\to \infty$;

\item[(iii)] there exists a function $u_j\in C^1(\de B)$ such
  that $\de E_j = \{(1+u_j(x))x:\ x\in \de B\}$ and $u_j \to 0$ in the
  $C^1$-norm, as $j\to \infty$;

\item[(iv)] $\de E_j$ has (scalar) mean curvature $H_j\in
  L^\infty(\de E_j)$ and $\|H_j - 1\|_{L^\infty(\de E_j)} \to 0$ as $j\to \infty$.
\end{itemize}
\end{teoS}
As a striking consequence of the Selection Principle, in Theorem \ref{teo:QII} we obtain a new and very short proof of the quantitative isoperimetric inequality in $\R^n$. Indeed, thanks to (iii) and for $j$ large enough, $E_j$ is a nearly spherical set, and thus, by Fuglede's estimate \eqref{Fug_intro}, we have that $Q(E_j)\geq C$ for some $C=C(n)>0$. Eventually passing to the limit in $j$, we get \eqref{QB_intro}. 

In addition, in Theorem \ref{teo:Hconj} we positively answer to another conjecture posed by Hall in \cite{Hall92}, and asserting that for any measurable set in $\R^2$ with positive and finite Lebesgue measure the following {\it Taylor-type lower bound} holds true:
\begin{equation}
  \label{eq:HHWconj}
 \defP(E) \geq C_0 \asym(E)^2 + o(\asym(E))^2,
\end{equation}
with \textit{optimal asymptotic constant} $C_0=\frac{\pi}{8(4-\pi)}$. The inequality \eqref{eq:HHWconj} was already established in 
\cite{HalHayWei91, HalHay93} for convex sets in the plane. By property (iv) of the Selection Principle and for $j$ large enough, it turns out that $E_j$ is a convex set, hence by the validity of \eqref{eq:HHWconj} for convex sets,
\begin{eqnarray*}
Q(E_j)\geq C_0+o(1).
\end{eqnarray*}
Passing to the limit as $j\to \infty$ we get $Q(B)\geq C_0$ which
immediately implies \eqref{eq:HHWconj} for all Borel sets in $\R^2$
with positive and finite Lebesgue measure. Actually, an even more
precise estimate than \eqref{eq:HHWconj} can be proved. Indeed, in the
forthcoming paper \cite{CicLeo10-2}, relying on a more refined version of the
Selection Principle, we show in a rather direct way how to compute any
order of the optimal Taylor-type lower bound of the isoperimetric
deficit in terms of powers of the asymmetry (this result extend to all
Borel sets a former one obtained in \cite{AlvFerNit10} for convex sets
in the plane).

We conclude this introduction by briefly describing the main ideas of the proof of the Selection Principle. First, we construct a suitable sequence of penalized
functionals $(Q_j)_j$ defined as 
\begin{equation*}
  Q_j(E) = Q(E) + \left(\frac{\asym(E)}{\asym(W_j)}-1\right)^2,
\end{equation*}
where $(W_j)_j$ is a recovery sequence for $Q(B)$. Then, in Lemma \ref{lemma:exist-approx} we check
that $Q_j$ admits a minimizer $E_j$ enjoying a number of useful
properties. First of all, the sequence $(E_j)_j$ is a recovery sequence
for $Q(B)$, that is (ii) in the statement of the Selection Principle. Moreover, we can show in Lemma \ref{lemma:Lambda-min} that each $E_j$ is a quasiminimizer
of the perimeter (more specifically, a strong $\Lambda$-minimizer, see
Section \ref{sect:selprin} and \cite{Ambrosio97}). Therefore, in Lemma \ref{lemma:regconv}, we can
appeal to the regularity theory for quasiminimizers of the perimeter
(see \cite{DeGiorgi61}, \cite{Massari74}, \cite{Tamanini82},
\cite{Tamanini84}, \cite{Almgren76}) to get the property (iii) stated in the Selection Principle. In addition, by a first variation argument, in Lemma
\ref{lemma:firstvar}, we obtain (iv).

\medskip


\section{Notation and preliminaries}
\label{sect:prelim}
Given a Borel set $E\subset \R^n$, we denote by $|E|$ its
Lebesgue measure. Let $x\in \R^n$ and
$r>0$ be given, then we denote $B(x,r)$  
 as the open ball in $\R^n$ centered at
$x$ and of radius $r$. We also set $B=B(0,1)$ and $\omega_n=|B|$.  
Given $E\in\R^n$ we also denote by $\chi_E$ its characteristic function and we say that a sequence of sets $E_j$ converges
to $E$ with respect to the $L^1$ or the $L^1_{\rm loc}$-convergence of sets if $\chi_{E_j}\to \chi_E$ in $L^1$ or in $L^1_{\rm loc}$, respectively.
We recall that the \textit{perimeter} of a Borel set $E$ inside 
an open set $\Omega\subset \R^n$ is 
\[
\Per(E,\Omega) := \sup \left\{\int_E \mdiv g(x)\, dx:\ g\in
  C^1_c(\Omega;\R^n),\ |g|\leq 1\right\}. 
\]
This definition extends the natural notion of $(n-1)$-dimensional area
of a smooth (or Lipschitz) boundary $\de E$. 
We will say that $E$ has \textit{finite perimeter} in $\Omega$
if $P(E,\Omega)<\infty$. Equivalently, $E$ is a set of finite
perimeter in $\Omega$ if the distributional derivative $D\carat{E}$ of its characteristic
function $\carat{E}$ is a vector-valued Radon measure in $\Omega$ with finite total variation $|D\carat{E}|(\Omega)$. We will simply write $\Per(E)$ instead of
$\Per(E,\R^n)$, and we will say that $E$ is a set of finite perimeter
if $\Per(E)<\infty$. 
From the well-known De Giorgi's Rectifiability Theorem (see \cite{AmbFusPal00}, \cite{EvaGar92}), 
$D\carat{E}=\nu_E\,\Hau^{n-1}\lfloor\de^*E$ where $\Hau^{n-1}$ is the
$(n-1)$-dimensional Hausdorff measure and $\de^*E$ is the {\it reduced boundary} of $E$, i.e., the set of points $x\in\de E$ such that the {\it generalized inner normal} $\nu_E(x)$ is defined, that is,
\begin{equation*}
\nu_E(x)=\lim\limits_{r\to 0}\frac{D\carat{E}(B(x,r))}{|D\carat{E}|(B(x,r))}\quad\text{and}\quad|\nu_E(x)|=1.
\end{equation*}
We recall (see for instance \cite{Giusti84}) that, given $E$ of finite perimeter in $\R^n$, for all $x\in\de^*E$
\begin{equation}\label{density}
\lim_{r\to 0^+}\frac{|E\cap B(x,r)|}{|B(x,r)|}=\frac12,
\end{equation}
and for a.e. $r\in\R$ it holds that
\begin{eqnarray}\label{giusti}
\Per(E, B(0,r))= \Per(E\cap B(0,r))- \Hau^{n-1}(E\cap \de B(0,r)).
\end{eqnarray}

We now recall some classical
definitions and properties of quasiminimizers of the perimeter (see \cite{Tamanini82}, \cite{Tamanini84}, \cite{Ambrosio97}). 
Given $E\subset \R^n$ of finite perimeter and $A\subset \R^n$ an open
bounded set, we define the
\textit{deviation from minimality} of $E$ in $A$ as 
\[
\devmin(E,A) = \Per(E,A) - \inf\{\Per(F,A):\ F\difsim E \subset\subset A\},
\]
where $F\difsim E = (F\setminus E) \cup (E\setminus F)$ and $S\subset\subset A$ iff $S$ is a relatively compact subset of
$A$. Note that $\devmin(E,A) \geq 0$, with equality if
and only if $E$ minimizes the perimeter in $A$ (w.r.t. all of its compact
variations $F$). We
set $\devmin(E,x,r) = \devmin(E,B(x,r))$ and, given $\gamma\in (0,1)$,
$R>0$ and $\Lambda>0$, we call \textit{quasiminimizer} of the
perimeter (in $\R^n$) any set $E$ of finite perimeter for which 
\begin{equation}\label{def:qmin}
\devmin(E,x,r) \leq \Lambda \omega_{n-1} r^{n-1+2\gamma}
\end{equation}
for all $x\in \R^n$ and $0<r<R$ (see \cite{Tamanini82,
  Ambrosio97}). We will also equivalently write $E\in
\QM(\gamma, R, \Lambda)$ to highlight the key parameters
occurring in the above definition of quasiminimality. 
If $E\in \QM(\frac 12,R,\Lambda)$, i.e. when $\gamma = \frac 12$ in
\eqref{def:qmin}, then we call $E$ a \textit{$\Lambda$-minimizer} (see \cite{Ambrosio97}). Finally, if $E$ satisfies
\[
\Per(E,B(x,r)) \leq \Per(F,B(x,r)) + \Lambda \omega_{n-1}
\frac{|E\difsim F|}{\omega_n} 
\]
for all $x\in \R^n$, $0<r<R$ and all Borel sets $F$ such that $E\difsim F\subset\subset B(x,r)$, then $E$ is said to be a \textit{strong}
$\Lambda$-minimizer. It is easy to check that any strong
$\Lambda$-minimizer is also a $\Lambda$-minimizer, hence a
quasiminimizer of the perimeter (we refer to \cite{Tamanini84} for a clear treatment of the subject).

We now extend the definition of quasiminimality to sequences of sets
of finite perimeter. We say that a sequence $(E_h)_h$ of sets of
finite perimeter is a 
\textit{uniform sequence of quasiminimizers} if
$E_h\in \QM(\gamma,R,\Lambda)$ for some fixed parameters $\gamma\in
(0,1)$, $R>0$ and $\Lambda>0$, and for all $h\in\N$.\\ 

Before going on, we recall the notion of \textit{convergence in the
Kuratowski sense}. Let
$(S_h)_h$ be a sequence of sets in $\R^n$, 
then we say that $S_h$ converges in the Kuratowski sense to a set
$S\subset \R^n$ as $h\to \infty$, if the following two properties
hold:  
\begin{itemize}
\item if a sequence of points $x_h\in S_h$ converges to a point $x$ as
  $h\to \infty$, then $x\in S$; 
\item for any $x\in S$ there exists a sequence $x_h\in S_h$
  such that $x_h$ converges to $x$ as $h\to \infty$. 
\end{itemize}
In addition, given $(S_h)_h$ an equibounded sequence of compact sets,
the convergence of $S_h$ to $S$ in the Kuratowski sense is
equivalent to the convergence in the Hausdorff metric.\\

In the following proposition we recall some crucial properties of
uniform sequences of quasiminimizers (see for instance Theorem 1.9 in  
\cite{Tamanini84}).

\begin{prop}[Properties of quasiminimizers]\label{prop:qmin}
Let $(E_h)_h$ be a uniform sequence of quasiminimizers, i.e. assume
there 
exist $\gamma\in(0,1)$, $R>0$ and $\Lambda>0$ 
such that $E_h \in \QM(\gamma,R,\Lambda)$ for all $h\in \N$. Then, if $E_h$ converges to $E$ in $L^1$, the
following facts hold.
\begin{itemize}
 

\item[(i)] $\de E_h$ converges to $\de E$ in the Kuratowski
  sense, as $h\to \infty$. If in addition $\de E$ is
  compact, then $\de E_h$ converges 
  to $\de E$ in the Hausdorff metric.

\item[(ii)] If $x\in\de^*E$ and $x_h\in\de E_h$ is such that $x_h\to
  x$, then there exists $\bar h$, such that $x_h\in\de^*E_h$ for all
  $h\geq\bar h$. Moreover, $\nu_{E_h}(x_h)\to \nu_{E}(x)$ as $h\to
  \infty$. 
\end{itemize}
\end{prop}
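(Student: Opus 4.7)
The plan is to proceed by standard arguments for $\Lambda$-quasiminimizers, which rely on two pillars: uniform density estimates for the volume, the complement volume, and the perimeter, together with De Giorgi's $\varepsilon$-regularity theorem. Both pillars are contained in the cited work of Tamanini. First, I would record the following uniform density estimates: there exist constants $c_1,c_2, R_0>0$ depending only on $n,\gamma,R,\Lambda$ such that for every $E\in\QM(\gamma,R,\Lambda)$, every $x\in\spt |D\chi_E|$ and every $r\in(0,R_0)$,
\begin{equation*}
c_1 r^n \leq \min\{|E\cap B(x,r)|,\,|B(x,r)\setminus E|\}, \qquad c_1 r^{n-1} \leq \Per(E,B(x,r)) \leq c_2 r^{n-1}.
\end{equation*}
I would also recall that the class $\QM(\gamma,R,\Lambda)$ is closed under $L^1_{\rm loc}$-convergence, so in particular the limit set $E$ itself lies in $\QM(\gamma,R,\Lambda)$ and satisfies the same density estimates. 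Throughout the argument I will choose representatives in the usual way (so that $\partial E$ coincides with $\overline{\partial^* E}$).

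For part (i), I would prove Kuratowski convergence in the two standard directions. If $x_h\in\partial E_h$ and $x_h\to x$, then applying the density estimates at $x_h$ with a fixed $r<R_0$ and passing to the limit via $L^1$-convergence of $\chi_{E_h}$ to $\chi_E$, I obtain $c_1 r^n \leq |E\cap B(x,r)|$ and $c_1 r^n \leq |B(x,r)\setminus E|$ for every sufficiently small $r$. Hence $x$ is neither a density $0$ nor a density $1$ point of $E$, so $x\in\partial E$. Conversely, if $x\in\partial E$, the density estimates applied to $E$ give positive measure of both $E$ and its complement inside every small ball $B(x,\rho)$; by $L^1$-convergence the same holds for $E_h$ and $E_h^c$ for $h$ large, so $\partial E_h\cap B(x,\rho)\neq\emptyset$, and a diagonal choice produces $x_h\in\partial E_h$ with $x_h\to x$. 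If in addition $\partial E$ is compact, then combining Kuratowski convergence with the uniform boundedness of the $\partial E_h$ (which follows by applying the first half of the argument to points at large distance from $\partial E$) upgrades the convergence to the Hausdorff metric in the usual manner.

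For part (ii), I would invoke the $C^{1,\alpha}$ regularity theorem of De Giorgi--Tamanini for quasiminimizers in the following quantitative form: there exist $\varepsilon_0>0$ and $\sigma\in(0,1)$, depending only on $n,\gamma,R,\Lambda$, such that whenever $F\in\QM(\gamma,R,\Lambda)$, $y\in\partial F$ and
\begin{equation*}
\excess(F,y,r) := r^{-(n-1)}\inf_{\nu\in\Sf}\int_{\partial^* F\cap B(y,r)}\tfrac{1}{2}|\nu_F-\nu|^2\,d\Hau^{n-1} < \varepsilon_0,
\end{equation*}
then $\partial F\cap B(y,\sigma r) = \partial^* F\cap B(y,\sigma r)$ is a $C^{1,\alpha}$ graph and the modulus of continuity of $\nu_F$ on this piece is controlled uniformly. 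Now fix $x\in\partial^* E$: since $\nu_E(x)$ exists, $\excess(E,x,r)\to 0$ as $r\to 0$, so I can choose $r$ small enough that $\excess(E,x,r)<\varepsilon_0/4$. Using the $L^1$-convergence $E_h\to E$, the convergence $\Per(E_h,B(x,r))\to \Per(E,B(x,r))$ for a.e.\ admissible $r$ (which holds for uniform quasiminimizers thanks to the density estimates and lower semicontinuity), and $x_h\to x$, I get $\excess(E_h,x_h,r/2)<\varepsilon_0$ for all $h$ large. The regularity theorem then yields $x_h\in\partial^* E_h$ eventually, and the uniform $C^{1,\alpha}$ control, combined with the convergence of the averaged inner normals (again via $L^1$-convergence of characteristic functions tested against the divergence theorem), gives $\nu_{E_h}(x_h)\to \nu_E(x)$.

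The main technical obstacle is not the density estimates (which are routine from the defining inequality of $\QM(\gamma,R,\Lambda)$) but the transfer of the smallness of $\excess$ from $E$ to $E_h$ at the shifted center $x_h$. This step requires the upper semicontinuity of excess under $L^1$-convergence of uniform quasiminimizers, which in turn is a consequence of the strong convergence $\Per(E_h,\cdot)\to \Per(E,\cdot)$ as measures, a classical but non-trivial property of uniform sequences in $\QM(\gamma,R,\Lambda)$ that I would quote from Tamanini's paper rather than reprove.
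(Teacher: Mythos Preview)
The paper does not actually prove Proposition~\ref{prop:qmin}; it is stated there as a recalled result, with a pointer to Theorem~1.9 in Tamanini's notes. So there is no ``paper's own proof'' to compare against. Your outline is the standard argument from that reference: uniform density estimates give Kuratowski convergence of boundaries, and the $\varepsilon$-regularity theorem combined with continuity of the excess under $L^1$-convergence of uniform quasiminimizers handles the convergence of normals. In fact, the two black boxes you rely on for part~(ii) --- continuity of the excess and the $\varepsilon$-regularity statement --- are exactly what the paper records separately as Proposition~\ref{prop:psiexc}(i)--(ii), so your sketch is fully consistent with how the paper packages these ingredients. One minor point: in the Hausdorff-convergence upgrade you should be explicit that eventual equiboundedness of the $\de E_h$ follows from the Kuratowski convergence you have just established (if $\de E_h$ had points escaping any fixed neighborhood of the compact $\de E$, a subsequence of such points would contradict the first half of~(i)); your parenthetical hints at this but could be sharpened.
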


The deviation from minimality (and, thus, the concept of
quasiminimality described above) turns out to be closely related to
another key quantity in De Giorgi's regularity theory: the
\textit{excess}. Given $x\in \R^n$, $r>0$ and $E$ of locally finite
perimeter, the excess of $E$ in $B(x,r)$ is defined as
\begin{eqnarray*}
\excess(E,x,r) &=& r^{1-n} \left(\Per(E,B(x,r)) -
  \left|D\carat E(B(x,r))\right|\right)\\ 
&=& \frac{r^{1-n}}{2} \min_{\stackrel{\xi\in \R^n}{|\xi|=1}} \left\{\int_{\de^* E \cap B(x,r)} |\nu_E(y) - \xi|^2\, d\Hau^{n-1}(y)\right\}.
\end{eqnarray*}

In the following proposition we state a useful continuity property
of the excess and the fundamental regularity
result for quasiminimizers (see for instance Proposition 4.3.1 in
\cite{Ambrosio97} and Theorem 1.9 in \cite{Tamanini84}). Before stating the proposition, we introduce some
extra 
notation. Given a point $x\in \R^n$ and a unit vector $\nu\in \R^n$,
we write with a little abuse of notation $x = \xnp + \xn\nu =
(\xnp,\xn)$, where $\xnp$ is the projection of $x$ onto the orthogonal
complement of $\nu$ and 
$\xn = \langle x,\nu\rangle$. Given $r>0$ and a unit vector $\nu\in
\R^n$, we define the
\textit{cylinder} $\cyl_{\nu,r} = \{x = (\xnp,\xn):\
\max(|\xnp|,|\xn|)<r\}$. Following our notation, $\cyl_{\nu,r}$ can be
defined as the Cartesian product $B_{\nu,r} \times (-r,r)\cdot\nu$,
where $B_{\nu,r}$ is the open ball of radius $r$ in the orthogonal
complement of $\nu$. Given a function $f:B_{\nu,r}\to \R$, we define
its \textit{graph} as
\[
\graph(f) = \{(\xnp, f(\xnp)\nu):\ \xnp\in B_{\nu,r}\}. 
\]

\begin{prop}[Excess and regularity for
  quasiminimizers]\label{prop:psiexc}
Given $\gamma\in (0,1)$, $R>0$ and $\Lambda>0$, the following facts
hold. 
\begin{itemize}
\item[(i)] Let $(E_h)_h$ be a sequence in $\QM(\gamma,R,\Lambda)$ and
  assume $E_h \to E$ in $L^1_{loc}$, as $j\to \infty$. Then 
\[
\lim_j \excess(E_j,x,r) = \excess(E,x,r),
\]
for all $x\in \R^n$ and $0<r<R$ for which $\Per(E,\de B(x,r)) = 0$.

\item[(ii)] There exists $\eps_0 = \eps_0(n, \gamma,R,\Lambda)>0$ with the
following property: if $E\in \QM(\gamma,R,\Lambda)$, $x_0\in \de
E$, and if $\excess(E,x_0,2r)<\eps_0$ for some $0<r<R/2$, then $x_0\in
\de^* E$ and, setting $\nu = \nu_E(x_0)$, one has that 
\[
(\de E - x_0) \cap \cyl_{\nu,r} = \graph(f),
\]
where $f\in C^{1,\gamma}(B_{\nu, r})\to \R$, with $f(0) = |\nabla
f(0)| = 0$. Moreover, one has the H\"older estimate 
    \begin{equation}
      \label{eq:nuholder}
      |\nabla f(v) - \nabla f(w)| \leq C |v-w|^\gamma
    \end{equation}
for all $v,w\in B_{\nu,r}$ and for a suitable
constant $C = C(n, \gamma, R, \Lambda)>0$.
\end{itemize}
\end{prop}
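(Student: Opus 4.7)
The plan is to treat parts (i) and (ii) separately, each relying on distinct (and classical) tools from the theory of sets of finite perimeter.

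For part (i), I would split the excess into its two ingredients, writing $r^{n-1}\excess(E_h,x,r) = \Per(E_h,B(x,r)) - |D\carat{E_h}(B(x,r))|$. The second term passes to the limit because $L^1_{loc}$-convergence of the sets gives weak-$*$ convergence $D\carat{E_h} \rightharpoonup D\carat{E}$ as vector-valued Radon measures, and under the assumption $\Per(E,\de B(x,r)) = 0$ the open ball is a continuity set for the limit measure. For the first term, lower semicontinuity of perimeter yields the $\liminf$ inequality for free; to obtain the reverse bound I would exploit the uniform quasiminimality. Fixing a small $\eta>0$ and choosing a good radius $\rho \in (r-\eta, r)$ (via Fubini, so that $\Hau^{n-1}((E_h \difsim E) \cap \de B(x,\rho))$ is controlled), I would build a compact variation $F_h^\eta$ of $E_h$ in $B(x,r)$ which coincides with $E$ inside $B(x,\rho)$ and with $E_h$ outside. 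By \eqref{def:qmin}, $\Per(E_h, B(x,r)) \leq \Per(F_h^\eta, B(x,r)) + \Lambda \omega_{n-1} r^{n-1+2\gamma}$. Letting $h\to\infty$ first and then $\eta\to 0$, using \eqref{giusti} and $\Per(E,\de B(x,r)) = 0$, yields the matching $\limsup$ bound.

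For part (ii), the plan follows the classical De Giorgi--Tamanini program. The first ingredient is the flatness (or $\eps$-regularity) lemma: if $\excess(E,x_0,2r)<\eps_0$ for $\eps_0 = \eps_0(n,\gamma,R,\Lambda)$ small enough, then $x_0 \in \de^*E$ and, setting $\nu = \nu_E(x_0)$, the translated boundary $(\de E - x_0)\cap \cyl_{\nu,r}$ is the graph of a Lipschitz function $f : B_{\nu,r}\to \R$ with $f(0) = 0$ and small Lipschitz seminorm. This is proved by a blow-up argument: a hypothetical counterexample sequence would rescale to a global perimeter minimizer with vanishing excess (the $\Lambda$-error rescales away), hence to a hyperplane, contradicting the uniform density estimates available on $\QM(\gamma,R,\Lambda)$. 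The second ingredient is Tamanini's excess-decay lemma: there exists $\tau \in (0,1/2)$ such that
\[
\excess(E, x, 2\tau r) \leq \tau^{2\gamma}\,\excess(E, x, 2r) + C\Lambda r^{2\gamma}
\]
whenever the left-hand excess is below $\eps_0$. Iterating this inequality on dyadic scales yields the Campanato-type pointwise decay $\excess(E, x, \rho) \leq C \rho^{2\gamma}$, and the Campanato characterization of $C^{0,\gamma}$ functions upgrades the Lipschitz graph $f$ to a $C^{1,\gamma}$ graph, producing the H\"older estimate \eqref{eq:nuholder}. The normalization $f(0) = |\nabla f(0)| = 0$ is automatic from the choice $\nu = \nu_E(x_0)$.

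The main obstacle is the excess-decay step, which rests on comparing $E$ near $x_0$ with a harmonic replacement on the tilted graph and then absorbing the quasiminimality error uniformly across scales. The scaling compatibility is precisely what the exponent $n-1+2\gamma$ in \eqref{def:qmin} is designed to guarantee: since $2\gamma>0$, the deviation term is genuinely subcritical with respect to the scale-invariant excess, so iterating the decay inequality converges at the rate $r^{2\gamma}$. Because these are well-established results (see \cite{Tamanini82,Tamanini84,Ambrosio97}), I would simply invoke them rather than reprove them, since the present paper's focus is to \emph{apply} the regularity theory for quasiminimizers, not to develop it.
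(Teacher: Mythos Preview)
The paper does not prove this proposition at all: it is stated as a known result and justified only by the citations ``see for instance Proposition~4.3.1 in \cite{Ambrosio97} and Theorem~1.9 in \cite{Tamanini84}''. Your final paragraph reaches exactly the same conclusion, so in that sense your proposal matches the paper perfectly.

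What you add beyond the paper is a correct outline of how those cited results are actually proved: the cut-and-paste competitor argument for the $\limsup$ perimeter bound in (i), and the De~Giorgi--Tamanini excess-decay iteration for (ii). This is more than the paper offers, and the sketch is accurate. One small point worth tightening in your part~(i): the compact variation $F_h^\eta$ you describe lives in $B(x,r)$, but the quasiminimality bound \eqref{def:qmin} is stated for balls of radius strictly less than $R$, so you should apply it on $B(x,r)$ directly (which is fine since $r<R$), and be a little careful that the extra term you pick up is $\Lambda\omega_{n-1}r^{n-1+2\gamma}$ independently of $\eta$; sending $\eta\to 0$ then gives $\limsup_h \Per(E_h,B(x,r)) \leq \Per(E,B(x,r)) + \Lambda\omega_{n-1}r^{n-1+2\gamma}$, and the residual term is removed by a further limit $r'\uparrow r$ along continuity radii (or by noting that the same argument at every such radius plus lower semicontinuity forces equality). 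This is standard and handled in the references, so your plan to simply invoke them is entirely in line with the paper.
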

\begin{oss}\label{rmk:regularity}\rm
Given a quasiminimizer $E\in \QM(\gamma,R,\Lambda)$, and owing to
Proposition 
\ref{prop:psiexc} and the fact that for any $x_0\in \de^*E$ one has
$\excess(E,x_0,r) \to 0$ as $r\to 0$, we conclude that $\de^* E$ is
a smooth hypersurface of class $C^{1,\gamma}$. Moreover, by Federer's
blow-up argument (see \cite{Giusti84}), the 
Hausdorff dimension of the \textit{singular set} $\de E\setminus
\de^*E$ cannot exceed $n-8$. Finally, one can show via standard
elliptic estimates for weak solutions to the mean curvature equation
with bounded prescribed curvature (see  
Section 7.7 in \cite{AmbFusPal00}) that, if $E$
is a strong $\Lambda$-minimizer, then $\de^*E$ is
of class $C^{1,\eta}$ for all $0<\eta<1$ (and of class $C^{1,1}$
in dimension $n=2$).  
\end{oss}
\medskip

In what follows we will denote by $\Se^n$ the class of Borel
subsets of $\R^n$ with positive and finite Lebesgue measure. Given  
$E\in\Se^n$, we define its \textit{isoperimetric deficit} $\defP(E)$ and its 
\textit{Fraenkel asymmetry} $\asym(E)$ as follows:
\begin{equation}
  \label{deficit}
  \defP(E) := \frac{\Per(E) - \Per(B_E)}{\Per(B_E)}
\end{equation}
and
\begin{equation}
  \label{asym}
  \asym(E) := \inf\left\{ \frac{|E\difsim (x+B_E)|}{|B_E|},\ x\in \R^n\right\},
\end{equation}
where $B_E$ denotes the ball centered at the origin such that $|B_E|=|E|$
and $E\difsim F$ denotes the symmetric difference of the two sets $E$
and $F$. Since both $\defP(E)$ and
$\asym(E)$ are invariant under isometries and dilations, from now on
we will set $|E|=|B|$ so that $B_E=B$. By definition, the Fraenkel asymmetry $\asym(E)$ satisfies
$\asym(E)\in [0,2)$ and it is zero if and only if $E$ coincides with
$B$ in measure-theoretic sense and up to a translation. Notice that
the infimum in \eqref{asym} is actually a minimum.

\section{The Selection Principle}
\label{sect:selprin}

Given a Borel set $E$ in $\R^n$ with $|E| = |B|$, the classical
isoperimetric inequality states that 
\begin{equation}\label{ISOP}
\Per(E) \geq \Per(B),
\end{equation}
with equality if and only if $\asym(E) = 0$ (i.e., if $E$ coincides
with the ball $B$ up to translations and to negligible sets), that is
to say, the isoperimetric deficit $\defP(E)$ is always non-negative
and zero if and only if $\asym(E) = 0$. 

In the next section  we will provide a new proof of the sharp
quantitative isoperimetric inequality in $\R^n$ which is a
quantitative refinement of \eqref{ISOP} and asserts the 
existence of a positive constant $C$ such that, for any
$E\in\Se^n$ it holds 
\begin{equation}\label{QII_2}
\defP(E)\geq C\asym^2(E).
\end{equation}
With the aim of presenting the main ideas of the method that will lead
to the proof of \eqref{QII_2}, we start with some relatively
elementary comments. 
As we have recalled before, the equality case in the
isoperimetric inequality \eqref{ISOP} is attained precisely when $E$
coincides with a ball in measure-theoretic sense. This uniqueness
property can be 
equivalently stated as the implication 
\begin{equation}
  \label{eq:defasym}
\defP(E)=0\ \impl\ \asym(E)=0.  
\end{equation}
By Lemma 2.3 and Lemma 5.1 in \cite{FusMagPra08} (or via a standard concentration-compactness type argument in \cite{Almgren76} Lemma VI.15) it is possible to strengthen \eqref{eq:defasym} and state the following 
\begin{lemma}\label{lemma:smallness}
For all $\alpha_0>0$ there exists $\delta_0>0$ such that, for any $E\in\Se^n$, if $\defP(E)<\delta_0$ then $\asym(E)<\alpha_0$.
\end{lemma}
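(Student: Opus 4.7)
The plan is to argue by contradiction via a compactness-rigidity scheme. Suppose the statement fails: then there exist $\alpha_0>0$ and a sequence $(E_k)\subset\Se^n$, which we may normalize so that $|E_k|=|B|$ (using the scale invariance of both $\defP$ and $\asym$), with $\defP(E_k)\to 0$ and $\asym(E_k)\geq\alpha_0$ for every $k$. In particular $\Per(E_k)\to\Per(B)=n\omega_n$, so the functions $\chi_{E_k}$ are equibounded in $BV(\R^n)$.

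I would then apply a concentration-compactness dichotomy to $(E_k)$. Up to a subsequence, exactly one of the following three alternatives occurs: \emph{(i) vanishing:} $\sup_{x\in\R^n}|E_k\cap B(x,R)|\to 0$ for every $R>0$; \emph{(ii) dichotomy:} there exist $m_1,m_2\in(0,|B|)$ with $m_1+m_2=|B|$ and measurable sets $E_k^1,E_k^2\subset E_k$ with $|E_k^j|\to m_j$ and $\mathrm{dist}(E_k^1,E_k^2)\to+\infty$; \emph{(iii) compactness up to translation:} there exist $x_k\in\R^n$ such that, for every $\e>0$, one can find $R>0$ with $|(E_k-x_k)\cap B(0,R)|\geq|B|-\e$ for all sufficiently large $k$.

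Case (i) is incompatible with $|E_k|=|B|$ via a standard Lions-type argument (cover $\R^n$ by balls of radius $R$ and combine the perimeter bound with the Sobolev embedding $BV\hookrightarrow L^{n/(n-1)}$). Case (ii) contradicts $\Per(E_k)\to\Per(B)$: since the two pieces become asymptotically far apart, $\Per(E_k)\geq\Per(E_k^1)+\Per(E_k^2)-o(1)$, and the classical isoperimetric inequality combined with the strict concavity of $t\mapsto t^{(n-1)/n}$ on $(0,\infty)$ gives
\[
\liminf_k\Per(E_k)\geq n\omega_n^{1/n}\bigl(m_1^{(n-1)/n}+m_2^{(n-1)/n}\bigr)>n\omega_n^{1/n}(m_1+m_2)^{(n-1)/n}=\Per(B).
\]

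Only case (iii) survives. After translating so that $x_k=0$, the sequence $(E_k)$ has uniformly bounded perimeter and is tight, so $BV$ compactness yields a subsequence converging in $L^1(\R^n)$ to a set $E_\infty$ with $|E_\infty|=|B|$ and $\Per(E_\infty)\leq\liminf_k\Per(E_k)=\Per(B)$ by lower semicontinuity. The rigidity in \eqref{ISOP} then forces $E_\infty$ to coincide, up to a null set, with a translate $y+B$; hence
\[
\asym(E_k)\leq\frac{|E_k\difsim(y+B)|}{|B|}\longrightarrow 0,
\]
contradicting $\asym(E_k)\geq\alpha_0$. The principal technical obstacle is the dichotomy step, namely producing the pieces $E_k^1,E_k^2$ together with the perimeter-splitting inequality above; this can be carried out along the lines of Almgren's Lemma VI.15 in \cite{Almgren76}, or more directly by multiplying $\chi_{E_k}$ by suitable radial cutoffs and invoking the coarea formula to select slicing radii on which the boundary contribution across the separating annulus is infinitesimal.
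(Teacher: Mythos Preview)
The paper does not prove this lemma; it cites Lemmas~2.3 and~5.1 of \cite{FusMagPra08} and, alternatively, ``a standard concentration-compactness type argument'' from Almgren \cite{Almgren76}, Lemma~VI.15. Your proposal is exactly this second route and is correct: vanishing is ruled out by combining a bounded-overlap covering with the relative isoperimetric inequality, dichotomy by the strict concavity of $t\mapsto t^{(n-1)/n}$, and the compactness case by lower semicontinuity of the perimeter together with rigidity in \eqref{ISOP}.
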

It is worth noticing that, as a consequence of Lemma \ref{lemma:smallness}, to prove \eqref{QII_2} it is enough to work in the {\it small asymmetry regime}, i.e. to show that there exist $\alpha_0>0$ and $C_0>0$ such that 
\begin{equation}\label{QII_q}
\frac{\defP(E)}{\asym^2(E)}\geq C_0
\end{equation}
for all $E\in\Se^n$ with $0<\asym(E)<\alpha_0$. In fact, assume
otherwise that $\asym(E)\geq\alpha_0$ and let $\delta_0$ be as in
Lemma \ref{lemma:smallness}. Then, since $\asym(E)<2$, it holds that
$\frac{\defP(E)}{\asym^2(E)}\geq\frac{\delta_0}{4}$, and thus
\eqref{QII_2} follows by taking $C=\min\{C_0,\frac{\delta_0}{4}\}$.

In order to study the small asymmetry regime, it is convenient to introduce the functional $Q:\Se^n\to[0,+\infty]$ defined as
\begin{equation}\label{eq:Q}
  Q(E) = \inf \Big\{ \liminf_k \frac{\defP(F_k)}{\asym(F_k)^2}:\ (F_k)_k\subset\Se^n,\ |F_k|=|E|,\ \asym(F_k)>0,\ |F_k\difsim E|\to 0\Big\}.
\end{equation}
The functional $Q$ is the lower semicontinuous envelope of the quotient $\frac{\defP(E)}{\asym(E)^2}$ with respect to the
$L^1$-convergence of sets and, by the lower semicontinuity of the perimeter and the continuity of the asymmetry with respect to this convergence,
$Q(E)=\frac{\defP(E)}{\asym(E)^2}$ whenever $\asym(E)>0$. Let us now
observe that, by the definition of $Q$, the inequality \eqref{QII_q}
in the small asymmetry regime (and, in turn, \eqref{QII_2}) turns out
to be equivalent to  
\begin{equation}\label{QBm0}
Q(B)>0.
\end{equation}
In order to prove \eqref{QBm0} one may study a recovery sequence for
$Q(B)$, that is a sequence of sets $(W_j)_j$ such that $|W_j|=|B|$,
$\asym(W_j)>0$ and $|W_j\difsim B|\to 0$, for which
$Q(B)=\lim_jQ(W_j)$. However, such a sequence may not be ``good
enough'' to handle in order to get the desired estimate
\eqref{QII_q}. To overcome this problem, we take advantage of the following theorem, which is the main result of this section, and asserts the existence of a recovery sequence $(E_j)_j$ for $Q(B)$ satisfying some useful additional properties which simplify the computation of $Q(B)$. 

\begin{teo}[Selection Principle]\label{teo:SP}
There exists a sequence of sets $(E_j)_j\subset \Se^n$, such that
\begin{itemize}
\item[(i)] $|E_j| = |B|$, $\asym(E_j)>0$ and $\asym(E_j) \to 0$ as $j\to \infty$;

\item[(ii)] $Q(E_j) \to Q(B)$ as $j\to \infty$;

\item[(iii)] for each $j$ there exists a function $u_j\in C^1(\de B)$
  such 
  that $\de E_j = \{(1+u_j(x))x:\ x\in \de B\}$ and $u_j \to 0$ in the
  $C^1$-norm, as $j\to \infty$;

\item[(iv)] $\de E_j$ has (scalar) mean curvature $H_j\in
  L^\infty(\de E_j)$ and $\|H_j - 1\|_{L^\infty(\de E_j)} \to 0$ as $j\to \infty$.
\end{itemize}
\end{teo}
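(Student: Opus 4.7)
My plan is to realize $E_j$ as the minimizer of a penalized functional built around a fixed recovery sequence for $Q(B)$, and then use the regularity theory recalled in Section \ref{sect:prelim} to upgrade its geometric properties. Concretely, I pick $(W_j)_j\subset\Se^n$ with $|W_j|=|B|$, $\asym(W_j)>0$, $|W_j\difsim B|\to 0$ and $Q(W_j)\to Q(B)$, and define
\[
Q_j(E)=Q(E)+\Bigl(\tfrac{\asym(E)}{\asym(W_j)}-1\Bigr)^2,\qquad |E|=|B|.
\]
Existence of a minimizer $E_j$ follows by the direct method: any minimizing sequence satisfies $Q\le Q(W_j)+1$ and, since $\asym<2$, has equibounded perimeter; by translation invariance of $Q_j$ and BV compactness it is precompact in $L^1$, $\asym$ is $L^1$-continuous, and $Q$ is $L^1$-lower semicontinuous by its very definition, so $Q_j$ is lower semicontinuous. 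The penalty forces $\asym(E_j)\sim\asym(W_j)\to 0$, giving (i); together with $|E_j|=|B|$ and translation invariance, this yields $E_j\to B$ in $L^1$ up to the appropriate centering. Comparing $Q(E_j)=Q_j(E_j)-(\ldots)^2\le Q_j(W_j)=Q(W_j)$ with the envelope-inequality $Q(B)\le\liminf_j Q(E_j)$ (which is immediate from $E_j\to B$ and the definition of $Q$) delivers (ii).

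The pivotal step is to show that each $E_j$ is a \emph{strong $\Lambda_j$-minimizer} of the perimeter, with $\Lambda_j$ depending on $\asym(W_j)$ and $Q(W_j)$. Given any compact variation $F$ with $E_j\difsim F\compact B(x,r)$, I first rescale $F\mapsto F'=\lambda F$ so that $|F'|=|B|$, with $|\lambda-1|=O(|E_j\difsim F|)$, and plug $F'$ into $Q_j(E_j)\le Q_j(F')$. Expanding, the differences $\defP(F')-\defP(E_j)$, $\asym(F')-\asym(E_j)$, and the penalty difference are all controlled linearly by $|E_j\difsim F|$: here one uses the Lipschitz continuity of $\asym$ in $L^1$, the scaling behavior of $\Per$ under dilation $\lambda$, and the uniform control on the factors $\asym(E_j)^{-2}$ and $\defP(E_j)\asym(E_j)^{-3}$ coming from $Q(E_j)\le Q(W_j)$. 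Rearranging produces $\Per(E_j,B(x,r))\le\Per(F,B(x,r))+\Lambda_j|E_j\difsim F|/\omega_n$. \textbf{I expect this bookkeeping to be the main obstacle}: one has to extract a purely linear dependence on $|E_j\difsim F|$ from a quotient that is itself of order $1/\asym^2$, and absorb the dilation correction from $\lambda$ into $\Lambda_j$. Note that $\Lambda_j$ need not be uniform in $j$ — a fact that is harmless, because the regularity theory is applied to one fixed $E_j$ at a time.

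Granted strong $\Lambda_j$-minimality and $E_j\to B$ in $L^1$, Proposition \ref{prop:qmin}(i) gives Hausdorff convergence $\de E_j\to\de B$, while Proposition \ref{prop:psiexc}(i) transfers the vanishing excess of $B$ to an asymptotic vanishing of $\excess(E_j,\cdot,\cdot)$; for $j$ large enough this falls below the threshold $\eps_0$, so that Proposition \ref{prop:psiexc}(ii) and Remark \ref{rmk:regularity} locally represent $\de E_j$ as a $C^{1,\eta}$ graph over the tangent hyperplane, with uniform H\"older bound on the gradient. Exploiting the continuity of normals in Proposition \ref{prop:qmin}(ii), these local graphs are glued, via radial projection onto $\de B$, into a single function $u_j\in C^1(\de B)$ with $\de E_j=\{(1+u_j(x))x:\ x\in\de B\}$; the $L^\infty$-smallness of $u_j$ from Hausdorff convergence together with the uniform $C^{1,\eta}$ estimate and interpolation yields $u_j\to 0$ in $C^1(\de B)$, proving (iii).

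Finally, (iv) comes from the first variation of $Q_j$ along volume-preserving deformations. Taking $\phi\in C_c^\infty(\R^n;\R^n)$ with $\int_{\de^*E_j}\phi\cdot\nu_{E_j}\,d\Hau^{n-1}=0$ and differentiating $t\mapsto Q_j(\Phi_t(E_j))$ at $t=0$, the minimality of $E_j$ translates on $\de^*E_j$ into an Euler--Lagrange identity of the schematic form
\[
H_j=\mu_j+r_j,
\]
where $\mu_j$ is the Lagrange multiplier for the volume constraint and $r_j$ collects the first variations of $\asym$ and of the penalty; by construction $r_j$ is uniformly $L^\infty$-bounded, since both factors in the penalty derivative are bounded and the shape derivative of $\asym$ is essentially the sign of the signed distance to an optimal ball. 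Strong $\Lambda_j$-minimality combined with the $C^{1,\eta}$ regularity from (iii) and standard elliptic estimates for the mean curvature equation with bounded prescribed curvature (cf.\ Section 7.7 of \cite{AmbFusPal00}) give $H_j\in L^\infty(\de E_j)$. As $j\to\infty$, the $C^1$-convergence $\de E_j\to\de B$ forces $\mu_j$ to approach the mean curvature of $\de B$ (that is $\mu_j\to 1$), while $r_j\to 0$ uniformly since $\asym(E_j)/\asym(W_j)\to 1$ and the relevant shape derivatives vanish in the limit; this yields $\|H_j-1\|_{L^\infty(\de E_j)}\to 0$ and completes (iv).
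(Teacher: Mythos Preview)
Your overall architecture matches the paper's exactly, but there is one genuine gap that undermines (iii) and (iv): the assertion that ``$\Lambda_j$ need not be uniform in $j$''. All of the convergence machinery you invoke---Proposition~\ref{prop:qmin}, Proposition~\ref{prop:psiexc}(i), the H\"older estimate \eqref{eq:nuholder}, and the Ascoli--Arzel\`a step giving $u_j\to 0$ in $C^1$---is stated for a \emph{uniform} sequence of quasiminimizers, with fixed parameters $(\gamma,R,\Lambda)$. If $\Lambda_j\to\infty$, then the excess threshold $\eps_0=\eps_0(n,\tfrac12,R,\Lambda_j)$ in Proposition~\ref{prop:psiexc}(ii) may collapse to $0$, and the H\"older constant $C(n,\tfrac12,R,\Lambda_j)$ may blow up, so you lose precisely the ``uniform $C^{1,\eta}$ estimate'' you later appeal to. Regularity of a single $\de E_j$ would survive, but the $C^1$-convergence $u_j\to 0$ and the uniform bound $\|H_j\|_\infty\le 4\Lambda/(n-1)$ (needed to pass to a limit in the curvature) would not.

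The paper's Lemma~\ref{lemma:Lambda-min} shows that a \emph{$j$-independent} $\Lambda$ can be achieved, and the mechanism is exactly the ``bookkeeping obstacle'' you flagged. The point you are missing is a dichotomy on the competitor $F$: if $\asym(E_j)^2\le |E_j\difsim F|$, one bypasses the quotient entirely and uses $\defP(E_j)\le (Q(B)+1)\asym(E_j)^2\le (Q(B)+1)|E_j\difsim F|$ directly; if instead $|E_j\difsim F|<\asym(E_j)^2$, then the minimality inequality $Q_j(E_j)\le Q_j(F)$, together with the Lipschitz bound on $\asym$ (Lemma~\ref{lemma:stimasym}) and $|\asym(F)/\asym(E_j)-1|\lesssim |E_j\difsim F|/\asym(E_j)\le \asym(E_j)$, keeps every ratio uniformly bounded. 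In both cases the constant depends only on $n$ and $Q(B)$. A secondary issue: BV compactness gives only $L^1_{\rm loc}$ convergence, and a minimizing sequence for $Q_j$ can lose mass at infinity; the paper's Lemma~\ref{lemma:exist-approx} handles this by showing the $L^1_{\rm loc}$ limit minimizes perimeter at fixed volume outside a large ball (hence is bounded), and then, if volume was lost, by reattaching a disjoint ball without increasing $Q_j$.
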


The rest of the section will be devoted to the proof of Theorem
\ref{teo:SP}. The latter will be a consequence of several intermediate
results, most of them having their own independent interest and being
suitable for applications to more general frameworks. The main
ingredients of the proof of Theorem \ref{teo:SP} involve a
penalization argument combined with some properties of quasiminimizers
of the perimeter.

Let $(W_j)_j$ be a recovery sequence for $Q(B)$ having 
\begin{eqnarray}\label{asymbound}
\asym(W_j)\leq\frac{1}{4(Q(B)+2)}
\end{eqnarray}
and satisfying 
\begin{equation}\label{eq:W_j}
  |Q(W_j) -  Q(B)| < \frac{1}{j}\qquad \mbox{for all }j\geq 1.
\end{equation} 
Note that, as pointed out in \cite{Hall92}, by selecting a suitable
sequence of ellipsoids converging to $B$, one can show that $Q(B)<+\infty$ (see also \cite{Maggi08}).\\
We now define the sequence 
of functionals $(Q_j)_j:\Se^n\to[0,+\infty)$ as
\begin{equation}
  \label{eq:Q_j}
  Q_j(E) = Q(E) + \left(\frac{\asym(E)}{\asym(W_j)}-1\right)^2.
\end{equation}
The following lemma holds

\begin{lemma}[Penalization]\label{lemma:exist-approx}
For any integer $j\geq 1$, 
\begin{itemize}
\item[(i)] $Q_j$ is lower semicontinuous with respect to the
  $L^1$-convergence of sets; 

\item[(ii)] there exists a bounded minimizer of the functional $Q_j$,
  i.e. a bounded set $E_j$ such that $|E_j|=|B|$ and $Q_j(E_j) \leq Q_j(F)$ for all $F\in\Se^n$;

\item[(iii)] $E_j \to B$ in $L^1$, $Q(E_j) \to Q(B)$ and
  $\frac{\asym(E_j)}{\asym(W_j)} \to 1$, as $j\to \infty$;

\end{itemize}
\end{lemma}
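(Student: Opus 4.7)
My plan is to prove the three items in order, handling (i) by a standard lower semicontinuity argument, (ii) by the direct method of the calculus of variations together with a concentration–compactness step, and (iii) by testing the minimizer $E_j$ against the recovery sequence $W_j$.

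For part (i), I would observe that $Q$ is by construction the $L^1$-lower semicontinuous envelope of $E\mapsto \defP(E)/\asym(E)^2$ on $\{E\in\Se^n : \asym(E)>0\}$, so it is automatically $L^1$-l.s.c. The penalty term $(\asym(E)/\asym(W_j)-1)^2$ is in fact $L^1$-continuous on the slice $\{|E|=|B|\}$, since $E\mapsto |E\difsim(x+B)|/|B|$ is $L^1$-Lipschitz in $E$ uniformly in $x$, the infimum defining $\asym(E)$ is attained, and squaring preserves continuity. The sum of an l.s.c.\ functional and a continuous one is l.s.c.

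For part (ii), I would apply the direct method to a minimizing sequence $(F_k)_k$ with $|F_k|=|B|$. Testing against $W_j$ yields
\[
Q_j(F_k)\leq Q_j(W_j) = Q(W_j) \leq Q(B)+\frac{1}{j},
\]
and since both summands of $Q_j$ are non-negative, this bounds $Q(F_k)$ and the penalty separately. Combining the penalty bound with \eqref{asymbound} gives $\asym(F_k)\leq C\asym(W_j)$, while the bound on $Q(F_k)$ gives $\defP(F_k)=Q(F_k)\asym(F_k)^2$ bounded, hence $\Per(F_k)$ uniformly bounded. After translating each $F_k$ so that $|F_k\difsim B|=\asym(F_k)|B|$, standard $BV$ compactness yields a subsequence converging in $L^1_{\rm loc}$ to some $E_j$. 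To upgrade to strong $L^1$-convergence (and hence $|E_j|=|B|$), I would exploit the smallness of $|F_k\setminus B|$ together with the equi-bounded perimeters in a concentration–compactness argument, ruling out small pieces drifting to infinity; the l.s.c.\ from (i) then makes $E_j$ a minimizer. Boundedness of the minimizer would be obtained by a truncation-and-rescaling argument: working inside a large fixed ball $B_R$, one shows that for $R$ large the constraint is inactive because $E_j$ is close to $B$ in $L^1$.

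For part (iii), testing the minimizer $E_j$ against $W_j$ once more gives
\begin{equation*}
Q(E_j) + \left(\frac{\asym(E_j)}{\asym(W_j)}-1\right)^2 \leq Q(W_j) \leq Q(B)+\frac{1}{j}.
\end{equation*}
The non-negativity of both summands forces $Q(E_j)\leq Q(B)+1/j$ and $|\asym(E_j)/\asym(W_j)-1|$ to be uniformly bounded. Since $\asym(W_j)\to 0$, this gives $\asym(E_j)\to 0$, and Lemma \ref{lemma:smallness} then yields $E_j\to B$ in $L^1$ (after absorbing a translation, which leaves $Q_j$ invariant by translation-invariance of $Q$ and $\asym$). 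The $L^1$-lower semicontinuity of $Q$ from (i) then gives $Q(B)\leq \liminf_j Q(E_j)\leq Q(B)$, so $Q(E_j)\to Q(B)$; substituting back into the displayed inequality forces the penalty to vanish, i.e.\ $\asym(E_j)/\asym(W_j)\to 1$.

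The hard part will be part (ii): because $Q_j$ is defined on sets in the unbounded ambient space $\R^n$, producing a minimizing sequence that converges \emph{strongly} in $L^1$ — and moreover whose limit is \emph{bounded} rather than merely of finite measure — requires a careful concentration–compactness argument, crucially leveraging the a priori bound $\asym(F_k)=O(\asym(W_j))$ forced by the penalty to prevent mass from escaping to infinity.
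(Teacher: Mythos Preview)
Your arguments for (i) and (iii) are correct and essentially coincide with the paper's (in (iii) the appeal to Lemma~\ref{lemma:smallness} is superfluous: $\asym(E_j)\to 0$ already means, after a translation absorbed by the translation-invariance of $Q_j$, that $|E_j\difsim B|\to 0$).

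The real gap is in (ii). You correctly flag loss of mass to infinity as the obstacle, but ``exploit the smallness of $|F_k\setminus B|$ together with equi-bounded perimeters in a concentration--compactness argument, ruling out small pieces drifting to infinity'' is a hope, not a mechanism; and note that $Q_j$ is l.s.c.\ only under genuine $L^1$-convergence, not $L^1_{\rm loc}$, so you cannot simply pass to the limit. In fact the paper does \emph{not} rule out mass escaping. After passing to an $L^1_{\rm loc}$ limit $V_j$ of the minimizing sequence $(V_{j,k})_k$, the paper first shows, by a splicing-and-contradiction argument, that $V_j$ minimizes perimeter among all competitors $F$ with $F\difsim V_j\compact \R^n\setminus B(0,3)$ and $|F|=|V_j|$; the key point is that any such modification of $V_{j,k}$ leaves its asymmetry unchanged, because the a priori bound $|V_{j,k}\cap B|>\tfrac34|B|$ pins the optimal center inside $B(0,2)$. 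This outer perimeter-minimality forces $V_j\subset B(0,R)$ for some $R$. If $|V_j|=|B|$ the $L^1_{\rm loc}$-convergence upgrades to $L^1$ and (i) concludes. If instead $|V_j|<|B|$, i.e.\ mass \emph{has} been lost, the paper sets $E_j:=V_j\cup B(x_0,t)$ with $x_0$ far from the origin and $\omega_n t^n=|B|-|V_j|$: the isoperimetric inequality applied to the escaping tails $V_{j,k}\setminus B(0,\rho)$ yields $\Per(E_j)\le\liminf_k\Per(V_{j,k})$, while the far-away ball does not affect the asymmetry, so $Q_j(E_j)\le\liminf_k Q_j(V_{j,k})$ and $E_j$ is a bounded minimizer. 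Your truncation-and-rescaling proposal for boundedness is circular, since it presupposes the strong $L^1$-closeness to $B$ that you are trying to establish.
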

\begin{proof}
(i) follows from the lower semicontinuity of the
perimeter and the continuity of $\frac{\asym(\cdot)}{\asym(W_j)}$
with respect to $L^1$-convergence of sets.

The proof of (ii) borrows some ideas from Lemma VI.15 in \cite{Almgren76} (see also \cite{Morgan94}). Let $j$ be fixed and let $(V_{j,k})_k\subset\Se^n$ be a minimizing sequence 
for $Q_j$ satisfying $|V_{j,k}| = |B|$, 
$Q_j(V_{j,k}) \leq \inf Q_j + 1/k$, and such that $\asym(V_{j,k})=\frac{|V_{j,k}\difsim B|}{|B|}$ for all $k\geq 1$. Since $\inf Q_j \leq Q_j(W_j) = Q(W_j)$ and
$Q(W_j) \to Q(B)$ as $j\to \infty$, we may assume without loss of
generality that, for all $k\geq 1$, 
\begin{equation}\label{Qjbound}
Q_j(V_{j,k}) \leq Q(B) + 1.
\end{equation}

In particular, this implies that there exists $M>0$ such that $\sup_k\Per(V_{j,k}) \leq M$. By the well-known compactness
properties of sequences of sets with equibounded perimeter, we can
assume that there exists $V_j\in\Se^n$ such that (up to subsequences) $V_{j,k}\to V_j$ in the $L^1_{loc}$ convergence of sets, which in particular implies that $|V_j|\leq\liminf_k|V_{j,k}|=|B|$. Moreover, by the lower semicontinuity of the
perimeter, we have also that $\Per(V_j) \leq M$. By the definition of $Q_j$, thanks to \eqref{asymbound} and \eqref{Qjbound}, we have that
\begin{eqnarray*}
\frac{|V_{j,k}\difsim B|}{|B|}=\asym(V_{j,k})\leq((Q(B)+1)^{\frac
  12}+1)\asym(W_j)\leq\frac{(Q(B)+1)^{\frac 12}+1}{4(Q(B)+2)}<\frac 14.
\end{eqnarray*}
Therefore 
\begin{equation}\label{eq:piudi34}
|V_{j,k} \cap B| > \frac 34 |B|,
\end{equation}
for all $k\in\N$. We now show that
\begin{equation}\label{eq:minper}
\Per(V_j) \leq \Per(F),
\end{equation}
for all sets $F\in \Se^n$ such
that $F\difsim V_j \compact \R^n\setminus B(0,3)$ and $|F| =
|V_j|$. Let us assume by contradiction that \eqref{eq:minper} does not hold, i.e., there exist $\delta>0$ and $F$ as above, such that 
\begin{equation}\label{falsottimo}
\Per(F)\leq\Per(V_j)-\delta.
\end{equation}
Given $0<r<R$, we set $C(r,R)=B(0,R)\setminus\overline {B(0,r)}$ and define $(\hat V_{j,k})_k\subset \Se^n$ as
\begin{eqnarray*}
\hat V_{j,k}=(V_{j,k}\setminus C(r,R))\cup(F\cap C(r,R)).
\end{eqnarray*} 
Note that, by the definition of $F$, by the $L^1_{loc}$ convergence of $V_{j,k}$ to $V_j$ and thanks to \eqref{giusti}, we can choose $r$ and $R$ such that 
\begin{itemize}
\item[(a)] $3<r<R$,
\item[(b)] $F\difsim V_j\subset\subset C(r,R)$,
\item[(c)] $\Hau^{n-1}((V_{j,k}\difsim V_j)\cap\de C(r,R))\to 0$ as $k\to \infty$,
\item[(d)] $\Per (\hat V_{j,k})=\Per(V_{j,k},\R^n\setminus\overline{C(r,R)})+\Per(F,C(r,R))+\Hau^{n-1}((V_{j,k}\difsim V_j)\cap\de C(r,R))$.
\end{itemize}
Let us observe that, since $\Per(V_j,C(r,R))\leq\liminf_k \Per(V_{j,k},C(r,R))$, on combining (c) and (d), and thanks to \eqref{falsottimo}, there exists $k_j\in\N$ such that, for all $k\geq k_j$ we get
\begin{equation}\label{perdelta}
\Per (\hat V_{j,k})\leq \Per(V_{j,k})-\frac{2\delta}3.
\end{equation}
Moreover, by the definition of $\hat V_{j,k}$ we also have that
\begin{eqnarray*}
|\hat V_{j,k}|&=&|F\cap C(r,R)|+|V_{j,k}\setminus C(r,R)|\\&=&|V_{j,k}|+|V_j\cap C(r,R)|-|V_{j,k}\cap C(r,R)|\\&=&|B|+|V_j\cap C(r,R)|-|V_{j,k}\cap C(r,R)|,
\end{eqnarray*}
therefore, passing to the limit as $k\to \infty$, one obtains 
\begin{equation}\label{limvolB}
\lim_k|\hat V_{j,k}|=|B|.
\end{equation}
Let us now fix $x_j\in\de^*F\cap C(r,R)$. Thanks to \eqref{limvolB} and \eqref{density}, for $k$ large enough there exists $0\leq\rho_{j,k}<\left(\frac{\delta}{3n\omega_n}\right)^{\frac{1}{n-1}}$, such that, defining $(\tilde V_{j,k})_k$ as
\begin{eqnarray}\label{falsuc}
\tilde V_{j,k}=\begin{cases}\hat V_{j,k}\cup B(x_j,\rho_{j,k})&\text{if } |\hat V_{j,k}|\leq|B|\cr
\hat V_{j,k}\setminus B(x_j,\rho_{j,k})&\text{if } |\hat V_{j,k}|>|B|,
\end{cases}
\end{eqnarray}
we get $|\tilde V_{j,k}|=|B|$, $B(x_j,\rho_{j,k})\subset\subset C(r,R)$, and 
\begin{equation}\label{perpalla}
|\Per (\hat V_{j,k})-\Per(\tilde V_{j,k})|\leq\Per (B(x_j,\rho_{j,k}))=n\omega_n(\rho_{j,k})^{n-1}<\frac{\delta}{3}.
\end{equation}
By \eqref{perdelta} and \eqref{perpalla}, we eventually get 
\begin{equation}\label{perdeltapalla}
\Per(\tilde V_{j,k})\leq \Per(V_{j,k})-\frac{\delta}{3}.
\end{equation}
This, in turn, would contradict the fact that $V_{j,k}$ is a minimizing sequence for $Q_j$, once we prove that, for $k$ sufficiently large,
\begin{equation}\label{eq:asymFE}
\asym(\tilde V_{j,k}) = \asym(V_{j,k}).  
\end{equation}
Indeed, by \eqref{eq:piudi34} and \eqref{falsuc} we have
\begin{eqnarray}\label{eq:FBpiccolo}
|\tilde V_{j,k}\difsim B| &=& |V_{j,k}\difsim B|\\ \nonumber
&=& 2(|B| - |V_{j,k}\cap B|)\\ \nonumber
&\leq & |B|/2.
\end{eqnarray}
On the other hand, if $x\in \R^n\setminus B(0,2)$ then $V_{j,k}\cap B
\subset \tilde V_{j,k} \difsim (x+B)$, and therefore by \eqref{eq:piudi34} we get 
\begin{equation}
  \label{eq:FBgrande}
|\tilde V_{j,k}\difsim (x+B)| \geq |V_{j,k}\cap B| > \frac 34|B|.
\end{equation}
On combining \eqref{eq:FBpiccolo} and \eqref{eq:FBgrande}, one
shows that the asymmetry of $\tilde V_{j,k}$ is attained on a ball centered in $x\in
B(0,2)$, that is \eqref{eq:asymFE} holds, as wanted.

Thanks to \eqref{eq:minper}, and by well-known results about minimizers of the perimeter
subject to a 
volume constraint, there exists $R>1$ such that $V_j\subset B(0,R)$.

\noindent We now distinguish two cases.

\noindent
\textit{Case $1$.} $|V_j| = |B|$. In this case the local
convergence is equivalent to convergence in $L^1(\R^n)$, hence by the
lower semicontinuity of $Q_j$ we have that $V_j$ is a minimizer of $Q_j$, thus we conclude taking $E_j=V_j$.
 
\noindent
\textit{Case $2$.} $|V_j| < |B|$. In this case the sequence
$(V_{j,k})_k$ ``looses volume at infinity''. We now claim that, setting $x_0 = (R+2,0,\dots,0)\in \R^n$ and $0<t<1$ such 
that $\omega_n t^n + |V_j|= |B|$, the set $E_j := V_j \cup B(x_0,t)$ is a minimizer for $Q_j$. To this
end, note that, since $V_j\subset B(0,R)$, there exists a null set ${\mathcal N}\subset(R,R+1)$ such that, for all $j\geq 1$ and $\rho\in(R,R+1)\setminus{\mathcal N}$, 
we have that
\begin{equation}\label{giusti-2}
\Per(V_{j,k}, B(0,\rho))= \Per(V_{j,k}\cap B(0,\rho))- \Hau^{n-1}(V_{j,k}\cap \de B(0,\rho)),\ \forall k\geq 1,
\end{equation}
thanks to \eqref{giusti}, and
\begin{equation}\label{giusti-3}
\lim_k \Hau^{n-1}(V_{j,k}\cap \de B(0,\rho)) = 0
\end{equation}
since $|V_{j,k}\setminus B(0,\rho)|\to 0$ as $k\to\infty$.

By \eqref{giusti-2} and \eqref{giusti-3}, and owing to the isoperimetric inequality in $\R^n$, we get
\begin{eqnarray}\label{eq:previousineq}
\Per(E_j) &=& \Per(V_j, B(0,\rho)) + n\omega_n t^{n-1}\\ \nonumber
&\leq& \liminf_k \Per(V_{j,k}, B(0,\rho)) + n\omega_n t^{n-1}\\ \nonumber
&=& \liminf_k(\Per(V_{j,k}\cap B(0,\rho))-\Hau^{n-1}(V_{j,k}\cap \de
B(0,\rho)))+n\omega_n t^{n-1}\\ \nonumber 
&=&\liminf_k (\Per(V_{j,k}) - \Per(V_{j,k}\setminus B(0,\rho))) + n\omega_n t^{n-1}\\ \nonumber
&\leq & \liminf_k (\Per(V_{j,k}) - n\omega_n t^{n-1}_k) + n\omega_n
t^{n-1}\\ \nonumber
&=& \liminf_k \Per(V_{j,k}),
\end{eqnarray}
where we have denoted by $t_k$ the radius of a ball equivalent to
$V_{j,k} \setminus B(0,\rho)$ and used the fact that $t_k \to t$ as $k\to
\infty$. Taking into account \eqref{eq:piudi34}, one can check that
the asymmetry of $E_j$ is attained on balls that are disjoint from
$B(x_0,t)$, hence
\begin{equation}\label{eq:asimlimasimk}
0<\asym(E_j) = \lim_k \asym(V_{j,k}).
\end{equation}
Then by \eqref{eq:previousineq} and \eqref{eq:asimlimasimk} we
conclude that $Q_j(E_j) \leq \liminf_k Q_j(V_{j,k}) = \inf Q_j$, as
claimed.  
\medskip


Finally, to prove (iii) we take $E_j$ a minimizer for $Q_j$ and observe that
\[
Q(E_j) \leq Q_j(E_j) \leq Q_j(W_j) = Q(W_j).
\]
This implies that $\asym(E_j) = \asym(W_j) + o(\asym(W_j))$ and that $\lim
Q(E_j) = \lim Q_j(E_j) = Q(B)$. Eventually, by the invariance of $Q_j$ under translation we may
assume that $E_j$ converges to $B$, thus completing the proof.
\end{proof}
We omit the elementary proof of the next lemma. It follows quite directly
from the definition of asymmetry and from the triangular inequality 
\begin{equation}
  \label{trineq}
  |A \difsim B| \leq |A \difsim C| + |C \difsim B|
\end{equation}
which holds in particular for any $A,\ B,\ C\in\Se^n$.
\begin{lemma}\label{lemma:stimasym}
Let $E\in\Se^n$ with $|E|=|B|=\omega_n$. For all $x\in \R^n$ and 
for any $F\in\Se^n$ with $E\difsim F
\compact B(x,\frac 12)$, it holds that
$|\asym(E) - \asym(F)| \leq \frac{2^{n+2}}{(2^n-1)\omega_n} |E\difsim F|$. 
\end{lemma}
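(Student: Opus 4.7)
The plan is to combine the triangular inequality \eqref{trineq} with a careful accounting of the mismatch between the reference balls $B_E=B$ and $B_F$, and to use the concentration condition $E\difsim F\compact B(x,\tfrac12)$ only to obtain a lower bound on $|F|$.

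\emph{Step~1 (volume control).} Since $E\difsim F\compact B(x,\tfrac12)$, I have $|E\difsim F|\leq|B(x,\tfrac12)|=\omega_n 2^{-n}$, hence
\begin{equation*}
||E|-|F||\leq|E\difsim F|\leq\omega_n 2^{-n}\quad\text{and}\quad |F|\geq\omega_n\,\frac{2^n-1}{2^n}.
\end{equation*}
This is the only place where the smallness of $|E\difsim F|$ is used, and it is exactly what produces the factor $\tfrac{2^n}{2^n-1}$ in the final constant.

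\emph{Step~2 (triangular comparison of asymmetries).} Let $y_F\in\R^n$ attain $\asym(F)$, so that $|F|\asym(F)=|F\difsim(y_F+B_F)|$, where $B_F$ is the ball centered at $0$ with $|B_F|=|F|$. Applying \eqref{trineq} twice I would write
\begin{equation*}
|E\difsim(y_F+B)|\leq|E\difsim F|+|F\difsim(y_F+B_F)|+|(y_F+B_F)\difsim(y_F+B)|.
\end{equation*}
The last term is the volume of the symmetric difference of two concentric balls and equals $\omega_n\big|1-|F|/\omega_n\big|=||E|-|F||\leq |E\difsim F|$. Bounding the left-hand side from below by $\omega_n\asym(E)$ yields
\begin{equation*}
\omega_n\asym(E)\leq 2|E\difsim F|+|F|\asym(F),
\end{equation*}
and a symmetric argument using a minimizer $y_E$ for $\asym(E)$ gives $|F|\asym(F)\leq 2|E\difsim F|+\omega_n\asym(E)$.

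\emph{Step~3 (closing the estimate).} Rewriting $\omega_n\asym(F)=|F|\asym(F)+(\omega_n-|F|)\asym(F)$ and using $\asym(F)\leq 2$ together with $||F|-\omega_n|\leq|E\difsim F|$, the first inequality of Step~2 collapses to $\omega_n(\asym(E)-\asym(F))\leq 4|E\difsim F|$. The reverse inequality is handled analogously, dividing by $|F|$ rather than $\omega_n$ and invoking the lower bound from Step~1, which gives
\begin{equation*}
\asym(F)-\asym(E)\leq \frac{4|E\difsim F|}{|F|}\leq\frac{2^{n+2}}{(2^n-1)\omega_n}\,|E\difsim F|.
\end{equation*}
Since $\tfrac{2^{n+2}}{2^n-1}\geq 4$, this estimate dominates the one in the opposite direction and yields the claim. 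I do not expect any real obstacle: the argument is purely combinatorial, and the only subtle point is remembering to correct for the fact that $B_E$ and $B_F$ are distinct balls when $|E|\neq|F|$, which is precisely what contributes the extra $|E\difsim F|$ term inside the triangular inequality.
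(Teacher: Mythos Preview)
Your argument is correct and follows exactly the approach indicated by the paper: the proof is omitted there, but the text states it follows directly from the definition of asymmetry and the triangular inequality \eqref{trineq}, by testing $\asym(E)$ with the optimal center for $F$ and vice versa, which is precisely your Step~2. Your careful bookkeeping of the mismatch between $B_E$ and $B_F$ and the lower bound on $|F|$ from Step~1 produce the stated constant.
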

We now establish a fundamental property of the sequence $(E_j)_j$ of Lemma \ref{lemma:exist-approx}, i.e., the fact that it is a uniform sequence of $\Lambda$-minimizers (see Section \ref{sect:prelim}).
\begin{lemma}[Uniform $\Lambda$-minimality]\label{lemma:Lambda-min}
There exist $\Lambda = \Lambda(n)>0$ and $j_0\in\N$ with the
following property: for 
all $j\geq j_0$ and for any minimizer $E_j$ of the functional $Q_j$
satisfying $|E_j| = |B|$, 
$Q(E_j) \leq Q(B)+1$, and such that $|\asym(E_j) - \asym(W_j)| \leq \asym(W_j)/2$,
we obtain that $E_j$ is a strong $\Lambda$-minimizer of the perimeter.
\end{lemma}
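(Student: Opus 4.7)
The plan is to exploit the $Q_j$-minimality of $E_j$ (among sets of volume $|B|$) together with the asymmetry-continuity bound of Lemma~\ref{lemma:stimasym}. Fix $x\in\R^n$, $r\in(0,1/2)$, and a Borel set $F$ with $E_j\difsim F\compact B(x,r)$. Without loss of generality assume $\Per(F,B(x,r))\leq \Per(E_j,B(x,r))$, else the desired inequality is trivial; since $E_j$ and $F$ coincide outside $B(x,r)$, this forces $\Per(F)\leq\Per(E_j)$, and both quantities are bounded by $2\Per(B)$ for $j\geq j_0$, owing to $Q(E_j)\leq Q(B)+1$ and $\asym(E_j)\in[\asym(W_j)/2,3\asym(W_j)/2]$, which make $\defP(E_j)$ vanish as $j\to\infty$.

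To compare $E_j$ with a volume-matched competitor, rescale: set $\lambda=(|B|/|F|)^{1/n}$, so $|\lambda F|=|B|$ and $|\lambda-1|\leq C(n)|E_j\difsim F|$. By scale-invariance of both $\defP$ and $\asym$, one has $Q(\lambda F)=Q(F)$ and $\asym(\lambda F)=\asym(F)$, so the minimality $Q_j(E_j)\leq Q_j(\lambda F)$ reads, with $w=\asym(W_j)$, $\alpha=\asym(E_j)$, $\beta=\asym(F)$,
\begin{equation*}
\frac{\defP(E_j)}{\alpha^2}-\frac{\defP(F)}{\beta^2}\;\leq\;\frac{(\beta-\alpha)(\beta+\alpha-2w)}{w^2}.
\end{equation*}

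Lemma~\ref{lemma:stimasym} provides $|\beta-\alpha|\leq c_n|E_j\difsim F|$, and the hypothesis $\alpha\in[w/2,3w/2]$ controls $|\alpha-w|$. Multiplying through by $\alpha^2\beta^2$ and reorganizing (using $\defP(E_j)/\alpha^2\leq Q(B)+1$), one extracts
\begin{equation*}
\defP(E_j)-\defP(F)\;\leq\; C(n)|E_j\difsim F|
\end{equation*}
in the regime where $|E_j\difsim F|$ is dominated by $w$. Then, via the scale-invariance identity $\defP(E_j)-\defP(F)=[\Per(E_j)-\lambda^{n-1}\Per(F)]/\Per(B)$ together with $|\lambda^{n-1}-1|\Per(F)\leq C'(n)|E_j\difsim F|$ (by boundedness of $\Per(F)$), this yields the strong $\Lambda$-minimality inequality with $\Lambda=\Lambda(n)$. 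Competitors $F$ for which $|E_j\difsim F|$ is not dominated by $w$ are handled separately: one bounds $|E_j\difsim F|$ below by a constant depending only on $n$, and the conclusion follows from the trivial estimate $\Per(E_j,B(x,r))\leq\Per(E_j)\leq 2\Per(B)$ after enlarging $\Lambda$.

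The main obstacle is the algebraic step in the small regime: the right-hand side of the $Q_j$-inequality above naively carries a $w^{-2}$ singularity as $j\to\infty$, but it is precisely compensated by the $\alpha^2,\beta^2$ factors, both comparable to $w^2$, so the resulting constant is independent of $j$. The correct bookkeeping of these cancellations, combined with the volume-rescaling correction from $\lambda\neq 1$, is the technical heart of the proof.
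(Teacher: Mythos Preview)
Your overall strategy matches the paper's: split according to the size of $|E_j\difsim F|$, use the $Q_j$-minimality (via scale invariance) in the ``small'' regime, and treat the ``large'' regime by cruder bounds. In the small regime your bookkeeping is essentially the paper's Case~2, and your device of rescaling $F$ by $\lambda=(|B|/|F|)^{1/n}$ to convert $\defP(E_j)-\defP(F)$ back into a perimeter inequality is correct and equivalent to what the paper does implicitly.

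The gap is in your large-regime argument. You claim that when $|E_j\difsim F|$ is ``not dominated by $w=\asym(W_j)$'' one can bound $|E_j\difsim F|$ below by a constant depending only on $n$, and then absorb $\Per(E_j)\leq 2\Per(B)$ into $\Lambda$. But the threshold you have just set is of order $w$, and $w\to 0$ as $j\to\infty$; so competitors in this regime can still satisfy $|E_j\difsim F|\to 0$, and the crude estimate gives no $j$-independent $\Lambda$. The paper avoids this by placing the dividing line at $|E_j\difsim F|\gtrless \asym(E_j)^2$ rather than at $\asym(E_j)$. When $\asym(E_j)^2\le |E_j\difsim F|$ one writes, using $Q(E_j)\le Q(B)+1$ and the isoperimetric inequality applied to $F$,
\[
\Per(E_j)\le \Per(B)+(Q(B)+1)\Per(B)\,\asym(E_j)^2\le \Per(B_F)+C(n)\,|E_j\difsim F|\le \Per(F)+C(n)\,|E_j\difsim F|,
\]
together with $|\Per(B)-\Per(B_F)|\le C(n)\,||E_j|-|F||\le C(n)\,|E_j\difsim F|$. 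In the complementary regime $|E_j\difsim F|<\asym(E_j)^2$ one has in particular $|E_j\difsim F|\lesssim \asym(E_j)$, so Lemma~\ref{lemma:stimasym} forces $\asym(F)$ to be comparable to $\asym(E_j)\sim w$, and your small-regime computation goes through. With this corrected split the argument closes; as written, it does not.
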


\begin{proof}
Let $x\in\R^n$ be fixed and let $F\subset\Se^n$ be such that $F\difsim E_j\subset\subset B(x,1/2)$. We want to prove that 
\begin{eqnarray*}
\Per(E_j)\leq\Per(F)+\Lambda\omega_{n-1}\frac{|E_j\difsim F|}{\omega_n}
\end{eqnarray*}
for some $\Lambda=\Lambda(n)>0$. Without loss of generality let us assume that $\Per(F)\leq \Per(E_j)$ and that $\asym(E_j) = \frac{|E_j\difsim
  B|}{|B|}$. We divide the proof in two cases.\\
\noindent
\textit{Case 1.} $\asym(E_j)^2 \leq |E_j \difsim F|$. In this case, 
by the assumption $Q(E_j) \leq Q(B)+1$, we get 
\begin{equation}\label{L1}
  \begin{split}
    \Per(E_j) & \leq \Per(B) + (Q(B)+1)\Per(B) \asym(E_j)^2 \\ 
 &\leq \Per(B)+ (Q(B)+1)\Per(B) |E_j\difsim F|
 \end{split}
 \end{equation}
 By the previous inequality, denoting by $B_F$ the ball equivalent to $F$ centered at the origin, using the isoperimetric inequality in $\R^n$ and the triangular inequality \eqref{trineq} we have 
\begin{equation}\label{L2}
 \begin{split}
\Per(E_j) 
&\leq \Per(F) + \Per(B)-\Per(B_F) + (Q(B)+1)\Per(B) |E_j\difsim F|\\
&\leq \Per(F)+ n\omega_n^{\frac{1}{n}} (|E_j|^{\frac{n-1}{n}} -
|F|^{\frac{n-1}{n}}) + (Q(B)+1)\Per(B) |E_j\difsim F|\\ 
&\leq \Per(F) + n\omega_n^{\frac{1}{n}} ((|F| + |E_j\difsim
F|)^{\frac{n-1}{n}} - |F|^{\frac{n-1}{n}}) + (Q(B)+1)\Per(B) |E_j\difsim F|\\ 
&= \Per(F) + n\omega_n^{\frac{1}{n}} |F|^{\frac{n-1}{n}}((1 + \frac{|E_j\difsim
F|}{|F|})^{\frac{n-1}{n}} - 1) + (Q(B)+1)\Per(B) |E_j\difsim F|.
\end{split}
\end{equation}
Using Bernoulli's inequality and the fact that, by construction, $|F| \geq \frac 34
\omega_n$, by \eqref{L2} we get
\begin{equation}
\begin{split}
\Per(E_j) &\leq \Per(F) + (n-1)\omega_n^{\frac{1}{n}} |F|^{\frac{-1}{n}}
|E_j\difsim F| + (Q(B)+1)\Per(B) |E_j\difsim F|\\ 
&\leq \Per(F) + (n-1) (4/3)^{\frac{1}{n}}
|E_j\difsim F| + (Q(B)+1)\Per(B) |E_j\difsim F|\\ 
& = \Per(F) + \Lambda_1 \omega_{n-1}\frac{|E_j\difsim F|}{\omega_n},
  \end{split}
\end{equation}
where we have set $\Lambda_1=\frac{\omega_n((n-1) (4/3)^{\frac{1}{n}}+(Q(B)+1)\Per(B))}{\omega_{n-1}}$.
\medskip

\noindent
\textit{Case 2.} $|E_j\difsim F| < \asym(E_j)^2$. By the inequality $Q_j(E_j) \leq Q_j(F)$ we obtain
\begin{equation}
  \label{L2.1}
    \defP(E_j) \leq \defP(F) + \left(\frac{\asym(E_j)^2}{\asym(F)^2} -1\right)\defP(F) + \eta,
\end{equation}
where
\begin{equation*}
 \eta := \asym(E_j)^2 \frac{(\asym(F) - \asym(E_j)) (\asym(F) +
    \asym(E_j) - 2\asym(W_j))}{\asym(W_j)^2}. 
\end{equation*}
By noting that the assumption $|\asym(E_j) - \asym(W_j)|\leq \asym(W_j)/2$ implies $\asym(E_j) \leq
3\asym(W_j)/2$, and by exploiting Lemma \ref{lemma:stimasym}, we have that
\begin{equation}\label{stimaA}
\begin{split}
\eta &\leq \tfrac{9}{4} (\asym(F) - \asym(E_j)) (\asym(F) +
    \asym(E_j) - 2\asym(W_j))\\ 
&\leq C_1 |E_j\difsim F|,
\end{split}
\end{equation}
for some $C_1=C_1(n)>0$. By Lemma \ref{lemma:stimasym} we have that
\begin{equation}\label{stimasymfrac1}
\left(\frac{\asym(E_j)^2}{\asym(F)^2} -1\right)\defP(F) \leq \frac{2^{n+4}}{(2^n-1)\omega_n} Q(F) |E_j\difsim F|.
\end{equation}
Observe now that, combining the hypothesis $|E_j\difsim F|<\asym^2(E_j)$ with Lemma \ref{lemma:stimasym} and recalling that $\asym(E_j)\to 0$, we have that there exists $C>0$ and $j_0\in\N$ such that, for all $j\geq j_0$ it holds that
\begin{eqnarray}\label{convergeauno}
\left|\frac{\Per(B)}{\Per(B_F)}-1\right|\leq C\asym(E_j)^2,\quad
\left|\frac{\asym(E_j)}{\asym(F)}-1\right|\leq C\asym(E_j).
\end{eqnarray}  
By the previous estimates, using that, by assumption on $F$, $\Per(F)\leq\Per(E_j)$ we also get that
\[
\begin{split}
Q(F) &\leq \frac{\Per(B) \asym(E_j)^2}{\Per(B_F)\asym(F)^2} Q(E_j) +
\left(\frac{\Per(B)}{\Per(B_F)}-1\right)\frac{1}{\asym(F)^2}.
\end{split}
\]
By the previous inequality, using \eqref{convergeauno}, we have for $j$ large enough 
\[
\begin{split}
Q(F)&\leq 2Q(E_j) + 2\leq 2(Q(B)+1) + 2,
\end{split}
\]
Therefore, \eqref{stimasymfrac1} becomes
\begin{equation}\label{stimasymfrac2}
\left(\frac{\asym(E_j)^2}{\asym(F)^2} -1\right)\defP(F) \leq C_2|E_j\difsim F|,
\end{equation}
with $C_2=C_2(n)>0$. In conclusion, starting from \eqref{L2.1} we have proved that
\[
\defP(E_j) \leq \defP(F) + (C_1+C_2) |E_j\difsim F|,
\]
that is  
\begin{eqnarray*}
\Per(E_j) &\leq & \Per(F)+\left(\frac{\Per(B)}{\Per(B_F)}-1\right)\Per(F) + (C_1+C_2)\Per(B)|E_j\difsim F|\\
&\leq&\Per(F)+\Lambda_2\Per(B)|E_j\difsim F|,
\end{eqnarray*}
with $\Lambda_2=(C_1+C_2)\Per(B)+1$. 
\medskip

The conclusion follows by setting $\Lambda =
\max(\Lambda_1,\Lambda_2)$. 
\end{proof}

In the next lemma, we prove the $C^{1,\gamma}$ regularity of $\de E_j$
for $j$ large enough, as well as the fact that $\de E_j$
converges to $\de B$ in the $C^1$-topology, as $j\to \infty$. Here, by convergence of $\de E_j$
to $\de B$ in the $C^1$-topology, we mean the following: there exist $r>0$ and an open
covering of $\de B$ by a finite family of cylinders
$\{\nu_k + \cyl_{\nu_k,r}\}_{k=1}^N$, with $\nu_k \in \de B$ 
such that it holds
\begin{itemize}
\item $\de E_j\subset\bigcup\limits_{k=1}^N(\nu_k + \cyl_{\nu_k,r})$ for $j$ large;
\item $\de E_j\cap \cyl_{\nu_k,r}=\graph({g_{j,k}})$ for some function $g_{j,k}\in C^1(B_{\nu_k,r})$, $k=1,\dots,N$, and for $j$ large;
\item $g_{j,k}\to g_k$ in $C^1$ as $j\to\infty$, where $g_k\in C^1(B_{\nu_k,r})$ is such that $\de B\cap \cyl_{\nu_k,r}=\graph({g_{k}})$, for $k=1,\dots,N$. 
\end{itemize}
\begin{lemma}[Regularity]\label{lemma:regconv}
There exists $j_1\in \N$ such that, for all $j\geq j_1$ and for any
minimizer $E_j$ of $Q_j$, $\de E_j$ is of class
$C^{1,\eta}$ for any $\eta\in (0,1)$. Moreover, $\de E_j$
converges to $\de B$ in the $C^1$-topology, as $j\to \infty$. 
\end{lemma}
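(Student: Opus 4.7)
The plan is to combine the uniform strong $\Lambda$-minimality from Lemma \ref{lemma:Lambda-min} with the excess-based regularity machinery summarized in Proposition \ref{prop:psiexc}. For $j\geq j_0$ the set $E_j$ is a strong $\Lambda$-minimizer, so Remark \ref{rmk:regularity} directly yields that $\de^* E_j$ is a hypersurface of class $C^{1,\eta}$ for every $\eta\in(0,1)$. The $C^{1,\eta}$ part of the statement therefore reduces to showing that, for $j$ large enough, the singular set $\de E_j\setminus \de^* E_j$ is empty.

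To achieve this, I would first exploit Lemma \ref{lemma:exist-approx}(iii): $E_j\to B$ in $L^1$. The standard density estimates for strong $\Lambda$-minimizers (together with $|E_j|=|B|$) keep the sets $E_j$ uniformly bounded, so the hypotheses of Proposition \ref{prop:qmin}(i) apply and $\de E_j\to \de B$ in the Hausdorff metric. Since $\de B$ is smooth, one can fix a radius $r>0$ so small that $\excess(B,\nu,2r)<\eps_0/2$ uniformly for $\nu\in\de B$, where $\eps_0$ is the threshold in Proposition \ref{prop:psiexc}(ii). Choose a finite cover of $\de B$ by cylinders $\nu_k+\cyl_{\nu_k,r}$, $k=1,\dots,N$, with $\nu_k\in\de B$. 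Proposition \ref{prop:psiexc}(i) applied at each fixed center $\nu_k$ (after slightly perturbing $r$ so that $\Per(B,\de B(\nu_k,2r))=0$) yields $\excess(E_j,\nu_k,2r)<\eps_0$ for $j$ large, and a straightforward comparison (using that excess is a scaled integral over the reduced boundary in a ball) then transfers this bound to $\excess(E_j,x,2r)<\eps_0$ at every $x\in \de E_j$ lying in a slightly smaller cylinder. Hausdorff convergence of $\de E_j$ to $\de B$ guarantees that every point of $\de E_j$ is covered this way. By Proposition \ref{prop:psiexc}(ii), every such $x$ belongs to $\de^*E_j$ and $\de E_j$ coincides near $x$ with the graph of a $C^{1,\gamma}$ function; in particular $\de E_j=\de^*E_j$ and the $C^{1,\eta}$ claim follows.

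For the $C^1$ convergence, the same construction produces, for $j$ large, functions $g_{j,k}\in C^{1,\gamma}(B_{\nu_k,r})$ with $\de E_j\cap\cyl_{\nu_k,r}=\graph(g_{j,k})$, and the uniform H\"older estimate \eqref{eq:nuholder} provides a uniform $C^{1,\gamma}$ bound on the family $\{g_{j,k}\}_j$ for each $k$. By Arzel\`a--Ascoli, along subsequences $g_{j,k}\to \tilde g_k$ in $C^1(B_{\nu_k,r})$. Proposition \ref{prop:qmin}(ii) forces the limit tangent planes to agree with those of $\de B$ at every point, so $\tilde g_k$ must coincide with the graph representation $g_k$ of $\de B\cap \cyl_{\nu_k,r}$; uniqueness of the limit upgrades subsequential to full convergence, giving the global $C^1$ convergence of $\de E_j$ to $\de B$.

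The main obstacle is the step producing a \emph{uniform} excess bound $\excess(E_j,x,2r)<\eps_0$ at \emph{every} $x\in\de E_j$, since Proposition \ref{prop:psiexc}(i) only guarantees pointwise excess convergence at fixed centers. Overcoming this requires the delicate interplay between excess continuity at the finite set of centers $\nu_k\in\de B$, a transfer estimate from nearby centers to arbitrary boundary points, and the uniform smallness of $\excess(B,\cdot,2r)$ on $\de B$ coming from the smoothness of the unit sphere; the rest of the argument is a fairly routine application of regularity theory.
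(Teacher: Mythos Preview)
Your proposal is correct and follows essentially the same route as the paper: uniform strong $\Lambda$-minimality from Lemma~\ref{lemma:Lambda-min}, excess convergence from Proposition~\ref{prop:psiexc}(i), the small-excess regularity criterion of Proposition~\ref{prop:psiexc}(ii), and an Arzel\`a--Ascoli argument for the $C^1$ convergence. The only cosmetic differences are that the paper works at a single reference point $e_n\in\de B$ (which suffices by rotational symmetry) rather than a finite cover, and makes your ``transfer estimate'' explicit by taking the excess of $B$ at radius $4r$ below $\eps_0/2^{n-1}$ and using the inclusion $B(x_j,2r)\subset B(e_n,4r)$ for $x_j\in\de^*E_j\cap B(e_n,r)$ to get $\excess(E_j,x_j,2r)\le 2^{n-1}\excess(E_j,e_n,4r)<\eps_0$; this is exactly the mechanism you flagged as the main obstacle.
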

\begin{proof}
First, we set $e_n = (0,\dots,0,1)\in \R^n$ and for a given $x\in
\R^n$ we write $x = (x',x_n) = (\xenp,\xen)$ following the notation
introduced in Section \ref{sect:prelim}. For a given $r>0$ we set 
\[
A_r = \{x'\in \R^{n-1}:\ |x'|<r\}.
\] 
We recall that, owing to Lemma \ref{lemma:Lambda-min} and for $j\geq
j_0$, $E_j\in \QM(\frac 12,\frac 12,\Lambda)$. Then, recalling the above definition of $C^1$ convergence of smooth boundaries, 
it is enough to prove that there exists $j_1\geq j_0$ and
a small $r_1>0$, such that one can 
find a sequence of functions $(g_j)_{j}$, with $g_j \in
C^{1,\frac 12}(A_{r_1})$ for all $j\geq j_1$, and satisfying the
following two properties:  
\begin{equation}
  \label{eq:graphgj}
(\de E_j - e_n)  \cap \cyl_{e_n,r_1} = \graph(g_j)\quad
\forall\, j\geq j_1,
\end{equation}
where $\cyl_{e_n,r_1} = A_{r_1} \times (-r_1,r_1)$;
\begin{equation}
  \label{eq:convgj}
  \|g_j - g\|_{C^1(A_{r_1})} \to 0\quad \text{as }j\to \infty,
\end{equation}
where we have set $g(x') = \sqrt{1-|x'|^2} -1$. 
Then, the proof of the
lemma will be completed on taking into account Remark
\ref{rmk:regularity}.\\ 

To prove \eqref{eq:graphgj} and \eqref{eq:convgj} above, we choose
$0<r<1$ such that $\excess(B,e_n,4r) < \frac{\eps_0}{2^{n-1}}$, where
$\eps_0$ 
is as in Proposition \ref{prop:psiexc} (ii) relative to $\QM(\frac 12,\frac 12,\Lambda)$. Thanks to Propositions
\ref{prop:psiexc} (i) and \ref{prop:qmin} (i)--(ii), we can
find $j_1\in \N$ such that for all $j\geq j_1$
\begin{itemize}
\item[(a)] $\de E_j\cap B(e_n,r)\neq \emptyset$,
\item[(b)] $\excess(E_j,e_n,4r)<\frac{\eps_0}{2^{n-1}}$,
\item[(c)] there exists $x_j\in \de^* E_j\cap B(e_n,r)$ such that
  $x_j\to e_n$ and $\nu_j:= \nu_{E_j}(x_j)\to e_n$.
\end{itemize}
By the 
definition of the excess, by the inclusion $B(x_j,2r) \subset
B(e_n,4r)$, and by (b) above, we have 
\begin{eqnarray*}
\excess(E_j,x_j,2r) &=& \frac{(2r)^{1-n}}{2} \inf_{|\xi|=1} \int_{\de^* E_j \cap B(x_j,2r)}
|\nu_{E_j}(z) - \xi|^2\, d\Hau^{n-1}(z)\\ 
&\leq & \frac{(2r)^{1-n}}{2} \int_{\de^* E_j \cap B(x_j,2r)}
|\nu_{E_j}(z) - \xi_j|^2\, d\Hau^{n-1}(z)\\
&\leq & \frac{(2r)^{1-n}}{2} \int_{\de^* E_j \cap B(e_n,4r)}
|\nu_{E_j}(z) - \xi_j|^2\, d\Hau^{n-1}(z)\\ 
&=& 2^{n-1} \excess(E_j,e_n,4r)\\ 
&<& \eps_0
\end{eqnarray*}
for $\xi_j =
\frac{D\carat{E_j}(B(e_n,4r))}{|D\carat{E_j}|(B(e_n,4r))}$ and for all
$j\geq j_0$. 
Thanks to Lemma \ref{lemma:Lambda-min} and 
Proposition \ref{prop:psiexc} (ii), 
there exists a sequence of functions $f_j\in
C^{1,\frac 12}(B_{\nu_j,r})$, such that $f_j(0) = |\nabla f_j(0)| = 0$ and 
$(\de E_j -x_j) \cap \cyl_{\nu_j,r} = \graph(f_j)$. At this point, one
can check that, setting $r_1 = r/2$ and taking a larger $j_1$ if needed, the following facts hold:
\begin{itemize}
\item[(d)] $\cyl_{e_n,r_1} \subset \cyl_{\nu_j,r}$ for $j\geq j_1$,

\item[(e)] we can find $g_j\in C^{1,\frac 12}(A_{r_1})$ for
  $j\geq j_1$ such that $\graph(g_j) = (x_j-e_n + \graph(f_j)) \cap \cyl_{e_n,r_1}$,

\item[(f)] 
$\|g_j - g\|_{L^\infty(A_{r_1})} \to 0$, where $g(x') =
  \sqrt{1-|x'|^2} -1$.

\end{itemize}
Indeed, (d) is a direct consequence of Proposition \ref{prop:qmin} (ii). Then,  
(e) follows on recalling that $x_j \to e_n$ by (c) and that $\nabla f_j$ is 
$\frac 12$-H\"older continuous (uniformly in $j$), thanks to 
\eqref{eq:nuholder}. Finally, (f) can be proved on using (c) and 
Proposition \ref{prop:qmin} (i).
  
Owing to (e) above and to the properties of $f_j$, we obtain
\eqref{eq:graphgj}. Then, thanks to (d), (e) and (f) combined with
\eqref{eq:nuholder}, we get 
\begin{equation}\label{gholder}
|\nabla g_j(v) - \nabla g_j(w)| \leq C |v-w|^{\frac 12}
\end{equation}
for all $v,w\in A_{r_1}$ and for a constant $C>0$ independent of
$j$. By a contradiction argument using (iii), \eqref{gholder}, and
Ascoli-Arzel\`a's Theorem, we finally conclude that 
\[
\|g_j - g\|_{C^1(A_{r_1})} \to 0
\]
as $j\to\infty$, thus proving \eqref{eq:convgj}. This
completes the proof of the lemma. 
\end{proof} 
\medskip

In the following lemma, we show that the (scalar) mean curvature $H_j$
of $\de E_j$ is in $L^\infty(\de E_j)$. Then, we compute a first
variation inequality of $Q_j$ at $E_j$ that translates into a
quantitative estimate of the oscillation of the mean curvature.
\begin{lemma}\label{lemma:firstvar}
Let $j\geq j_1$, with
$j_1$ as in Lemma \ref{lemma:regconv}, and let $E_j$ be a minimizer of $Q_j$. Then 
\begin{itemize}
\item[(i)] $\de E_j$ has scalar mean curvature $H_j \in
  L^\infty(\partial E_j)$ (with
orientation induced by the inner normal to $E_j$). Moreover, for
${\Hau}^{n-1}$-a.e. $x,y\in \de E_j$, one has 
\begin{equation}
  \label{firstvar}
  |H_j(x) - H_j(y)| \leq \frac{2n}{n-1} \left( Q(E_j)
    \asym(E_j) + \frac{\asym(E_j)^2}{\asym(W_j)^2} |\asym(E_j) -
    \asym(W_j)|\right);
\end{equation}

\item[(ii)] $\lim_j\|H_j-1\|_{L^\infty(\de E_j)}=0$.
\end{itemize}
\end{lemma}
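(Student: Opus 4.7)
The plan is to prove (i) via a first-variation argument for $Q_j$ at the minimizer $E_j$, and (ii) by combining the resulting oscillation bound with the $C^1$-convergence $\partial E_j\to\partial B$ from Lemma \ref{lemma:regconv}. The $L^\infty$ bound on $H_j$ is immediate: by Lemma \ref{lemma:Lambda-min} and Lemma \ref{lemma:regconv}, $\partial E_j$ is a $C^{1,\eta}$ boundary of a strong $\Lambda(n)$-minimizer, and the Euler--Lagrange equation for strong $\Lambda$-minimality (cf.\ Remark \ref{rmk:regularity}) prescribes $H_j$ in $L^\infty$ with $\|H_j\|_{L^\infty(\partial E_j)}\leq\Lambda$.

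For the oscillation \eqref{firstvar}, I would fix $x,y\in\partial E_j$ Lebesgue points of $H_j$ and $\nu_{E_j}$, a nonnegative bump $\varphi\in C_c^\infty(B(0,1))$ with $\int\varphi=1$, and for small $\rho>0$ consider the vector field
\[
X_\rho(z) = \rho^{1-n}\varphi\bigl(\tfrac{z-x}{\rho}\bigr)\nu_{E_j}(x) - c_\rho\,\rho^{1-n}\varphi\bigl(\tfrac{z-y}{\rho}\bigr)\nu_{E_j}(y),
\]
with $c_\rho\to 1$ chosen so that $\int_{\partial E_j}X_\rho\cdot\nu_{E_j}\,d\mathcal{H}^{n-1}=0$, and set $\Phi_t=\mathrm{Id}+tX_\rho$, $E_j^t=\Phi_t(E_j)$. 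After a rescaling by $1+o(t)$ to enforce $|E_j^t|=\omega_n$ exactly (harmless since $Q_j$ is scale invariant), the classical first variation of perimeter gives $\Per(E_j^t)-\Per(E_j) = (n-1)t\int_{\partial E_j}H_j X_\rho\cdot\nu_{E_j}\,d\mathcal{H}^{n-1}+o(t)$, while the triangle inequality for the symmetric difference combined with the swept-volume formula $|E_j\triangle E_j^t|=t\int_{\partial E_j}|X_\rho\cdot\nu_{E_j}|\,d\mathcal{H}^{n-1}+o(t)$ yields $|\asym(E_j^t)-\asym(E_j)|\leq \omega_n^{-1}|E_j\triangle E_j^t|+o(t)$. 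Expanding the minimality $Q_j(E_j)\leq Q_j(E_j^t)$ to first order and using that the sign of $t$ may be reversed, one obtains an identity relating the first variation of the deficit $(\defP)'=\tfrac{n-1}{\Per(B)}\int H_j X_\rho\cdot\nu_{E_j}\,d\mathcal{H}^{n-1}$ and the first variation $a'$ of $\asym$, of the form $(\defP)' = 2a'\bigl(\asym(E_j)Q(E_j)-\asym(E_j)^2(\asym(E_j)-\asym(W_j))/\asym(W_j)^2\bigr)$. Sending $\rho\to 0$ and applying Lebesgue differentiation, the left-hand side tends to $\tfrac{n-1}{n\omega_n}(H_j(x)-H_j(y))$, while $|a'|$ is controlled by a constant multiple of $1/\omega_n$; after careful accounting of the signs and using $\Per(B)/\omega_n=n$, this produces \eqref{firstvar} with the stated constant $\tfrac{2n}{n-1}$.

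For (ii), the hypotheses $\asym(E_j)\to 0$, $\asym(E_j)/\asym(W_j)\to 1$, and $Q(E_j)\to Q(B)<+\infty$ from Lemma \ref{lemma:exist-approx}(iii) substituted into \eqref{firstvar} yield $\sup_{x,y\in\partial E_j}|H_j(x)-H_j(y)|\to 0$, so $H_j$ is asymptotically constant. To identify the constant as $1$, I use the $C^1$-convergence from Lemma \ref{lemma:regconv}: locally $\partial E_j$ is the graph of $g_j\to g$ in $C^1$, and $H_j$ satisfies the prescribed mean curvature equation $(n-1)H_j = -\mathrm{div}(\nabla g_j/\sqrt{1+|\nabla g_j|^2})$ in the distributional sense; testing against a fixed compactly supported function and passing to the limit using the $C^1$ convergence of $g_j$ together with the uniform $L^\infty$ bound on $H_j$ shows that any weak-$\ast$ limit of $H_j$ coincides with the mean curvature of $\partial B$, which is identically $1$. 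The main technical obstacle is the rigorous treatment of the first variation of $\asym$---defined as an infimum over translations, hence only one-sidedly differentiable a priori---which is handled by the stability of the optimal translation in the definition of $\asym$ together with Lemma \ref{lemma:stimasym}, providing the two-sided Lipschitz-type control needed to make the two-sided Taylor expansion of $Q_j(E_j^t)$ rigorous.
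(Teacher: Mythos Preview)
Your approach is essentially the paper's: both compute the first variation of $Q_j$ at the minimizer via volume-preserving perturbations localized near two boundary points, control the variation of $\asym$ only through a Lipschitz bound, and for (ii) pass to the limit in the weak mean-curvature equation using the $C^1$ convergence from Lemma~\ref{lemma:regconv}. The paper differs only cosmetically---it uses graphical inflation/deflation (so volume is preserved exactly without rescaling) and works with $t>0$ plus a swap of the two points instead of both signs of $t$; note that your intermediate ``identity'' $(\defP)'=2a'(\cdots)$ is not literally correct since $a'$ need not exist, but as you yourself recognize at the end, only the two-sided Lipschitz control $|\asym(E_j^t)-\asym(E_j)|\le C|t|$ is actually used, and that yields the inequality \eqref{firstvar} directly.
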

\begin{proof}
To prove the theorem we consider a ``parametric
inflation-deflation'', that will lead to the first variation
inequality \eqref{firstvar} and, in turn, to (ii). 

Let us fix $x_1,\ x_2\in\partial E_j$ such that $x_1\not=x_2$. By
Lemma \ref{lemma:regconv}, for $j\geq j_1$ there exist $r>0$, two unit
vectors $\nu_1, \nu_2\in \R^n$, and two functions 
$f_1\in C^1(B_{\nu_1,r})$ and $f_2\in C^1(B_{\nu_2,r})$, 
such that 
$(x_1 + \cyl_{\nu_1,r})\cap (x_2 + \cyl_{\nu_2,r})=\emptyset$ and
\begin{eqnarray*}
(\de E_j - x_m) \cap \cyl_{\nu_m,r} = \graph(f_m),\quad m=1,2.
\end{eqnarray*}  
For $m=1,2$ we take $\varphi_m\in C^1_c(B_{\nu_m,r})$ such that
$\varphi_m\geq 0$ and 
\begin{equation}
  \label{eq:fipsi}
  \int_{B_{\nu_m,r}} \varphi_m = 1.
\end{equation}
Let $\eps>0$ be such that, setting $f_{m,t}(w) = f_m(w) +
t\varphi_m(w)$ for $w\in B_{\nu_m,r}$, one has $\graph(f_{m,t}) \subset
\cyl_{\nu_m,r}$ for all $t\in (-\eps,\eps)$. We use the functions
$f_{m,t}$, $m=1,2$, to modify the set 
$E_j$, i.e. we define 
\[
E_{j,t} = \Big(E_j \setminus \bigcup_{m=1,2} (x_m +
\cyl_{\nu_m,r})\Big) \cup \big(x_1 + \sgr(f_{1,t})\big)\cup \big(x_2 + \sgr(f_{2,-t})\big),
\]
where
\[
\sgr(f_{m,s}) = \{(w,l)\in\cyl_{\nu_m,r}:\ l<f_{m,s}(w)\}.
\]
By \eqref{eq:fipsi} one immediately deduces that $|E_{j,t}| =
|E_j|$. Moreover, by a standard computation one obtains
\begin{equation}\label{teo:firstvar_perimeter}
\frac{1}{n-1}\frac{d}{dt}P(E_{j,t})_{|_{t=0}} =
\int_{B_{\nu_1,r}}h_1 \varphi_1 -
\int_{B_{\nu_2,r}}h_2 \varphi_2,
\end{equation}
where for $m=1,2$
\[
h_m(v) := H_j(v,f_m(v)) = -\frac{1}{n-1}\mdiv \left( \frac{\nabla
f_m(v)}{\sqrt{1+|\nabla f_m(v)|^2}}\right).
\]
Then, by Theorem 4.7.4 in \cite{Ambrosio97}, the
$L^\infty$-norm of $H_j$ over $\de E_j$ turns out to be bounded by the
constant $4\Lambda/(n-1)$. 

By the definition of $E_{j,t}$ one can verify that, for $t>0$  
\begin{eqnarray}\label{teo:firstvar_asymmetry}
|\asym(E_{j,t})-\asym(E_j)|\leq \frac{t}{\omega_n}.
\end{eqnarray}
By \eqref{teo:firstvar_perimeter} and \eqref{teo:firstvar_asymmetry},
and for $t>0$, we also have that
\begin{eqnarray*}
Q(E_{j,t})&=&\frac{P(E_{j,t})-P(B)}{P(B)\asym(E_{j,t})^2} \leq
\frac{P(E_{j,t})-P(B)}{P(B)}\cdot\frac{1}{\asym(E_j)^2\left(
1-\frac{t}{\asym(E_j)\omega_n }\right)^2}\\&\leq& 
Q(E_j)\cdot\frac{1}{1-\frac{2t}{\asym(E_j)\omega_n}} +
\frac{t}{P(B)\asym(E_j)^2}\frac{d}{dt}P(E_{j,t})_{|t=0} + o(t)\\  
&\leq& Q(E_j)+\frac{t}{\omega_n\asym(E_j)}\left(2Q(E_j) +
  \frac{1}{n\asym(E_j)}\frac{d}{dt}P(E_{j,t})_{|t=0}\right) + o(t). 
\end{eqnarray*}
On using again \eqref{teo:firstvar_asymmetry}, we get 
\begin{eqnarray*}
Q_j(E_{j,t})&\leq& Q_j(E_j)+\frac{t}{\omega_n\asym(E_j)}\left(2Q(E_j)
  + \frac{1}{n\asym(E_j)}\frac{d}{dt}P(E_{j,t})_{|t=0}\right)\\
& &\quad +\frac{2t}{\omega_n\asym(W_j)^2}|\asym(E_j)-\asym(W_j)| + o(t).
\end{eqnarray*}
Exploiting now the minimality hypothesis $Q_j(E_j)\leq Q_j(E_{j,t})$
in the previous inequality, dividing by $t>0$, multiplying by
$n\omega_n\asym(E_j)^2$, and finally taking the limit as
$t$ tends to $0$, we obtain 
\begin{equation}\label{ttendeazero}
0\leq 2nQ(E_j)\asym(E_j) +
  \frac{d}{dt}P(E_{j,t})_{|t=0} +
2n\frac{\asym(E_j)^2}{\asym(W_j)^2}|\asym(E_j)-\asym(W_j)|.    
\end{equation}
Let now $w_m\in B_{\nu_m,r}$ be a Lebesgue
point for $h_{f_m}$, $m=1,2$. On choosing a sequence
$(\varphi_m^k)_k\subset C^1_c(B_{\nu_m,r})$ of non-negative
mollifiers, such that  
\[
\lim_k \int_{B_{\nu_m,r}} h_{f_m} \varphi_m^k = h_{f_m}(w_m)
\]
for $m=1,2$, we obtain that for $E_{j,t}^k$ defined as before, but
with $\varphi_m^k$ replacing $\varphi_m$, it holds
\begin{eqnarray}\label{eq:firstvar_curvature}
\frac{1}{n-1}\lim_{k}\frac{d}{dt}P(E_{j,t}^k)_{|_{t=0}} &=& \lim_k
\int_{B_{\nu_1,r}} h_{f_1}\varphi_1^k - \int_{B_{\nu_1,r}} h_{f_2}\varphi_2^k\\ 
\nonumber &=& h_{f_1}(w_1) - h_{f_2}(w_2).
\end{eqnarray}
Moreover, from \eqref{ttendeazero} with $E_{j,t}^k$ in place of
$E_{j,t}$ and thanks to \eqref{eq:firstvar_curvature}, we get 
\begin{equation}
h_{f_2}(w_2) - h_{f_1}(w_1) = -\frac{1}{n-1}\lim_k
\frac{d}{dt}P(E_{j,t}^k)_{|t=0} \leq 
\frac{2n}{n-1}\left(Q(E_j)\asym(E_j) +  
 \frac{\asym(E_j)^2}{\asym(W_j)^2}|\asym(E_j)-\asym(W_j)|\right).    
\end{equation}
The proof of \eqref{firstvar}, and therefore of claim (i), is achieved
by exchanging the roles of $x_1$ and $x_2$.
\medskip

\medskip

Finally, to prove $(ii)$ we recall that $\sup_j\|H_j\|_{L^\infty(\de E_j)}\leq
4\Lambda/(n-1)$. Moreover, by \eqref{firstvar} we have that
\begin{eqnarray}\label{eq:firstvar_osc}
\lim_j\ \esssup_{x,y\in\partial E_j}|H_j(x)-H_j(y)|= 0.
\end{eqnarray}
Thanks to \eqref{eq:firstvar_osc} we conclude that, up to
subsequences, there exists a constant $H$ such that $\|H_j-H\|_{L^{\infty}(\de E_j)}\to 0$ as $j\to
\infty$. By Lemma \ref{lemma:regconv}, $\de E_j$ converges
to $\de B$ in $C^1$ and thus we can consider $U = B_{e_n,\frac 12} \subset
\R^{n-1}$ such that, for $j$ large enough, the portion of the boundary
of $E_j$ inside the open set $U\times (0,+\infty)e_n
\subset \R^n$ is
the graph of a function $f_j\in C^{1}(U)$ converging to the function
$f(w) = \sqrt{1 - |w|^2}$ in the $C^1$-norm, as $j\to \infty$. As a
consequence, adopting the Cartesian notation for the mean curvature as
in (i), 
\begin{eqnarray*}
\lim_j\int_{U}h_{f_j}\varphi &=& \lim_j\int_{U}\frac{\langle\nabla
  f_j,\nabla\varphi\rangle}{\sqrt{1+|\nabla f_j|^2}}\\ 
&= & \int_{U}\frac{\langle\nabla
  f,\nabla\varphi\rangle}{\sqrt{1+|\nabla f|^2}}\\ 
&=& \int_{U}h_{f}\varphi,
\end{eqnarray*}
for any $\varphi\in C^1_c(U)$. This proves that
$H$ coincides with the mean curvature of the ball $B$, i.e. $H=h_f =
1$. It is then easy
to conclude that the whole sequence $H_j$ must converge to $H=1$, and
this completes the proof of (ii).
\end{proof}

\begin{proof}[Proof of Theorem \ref{teo:SP}]
Statements (i) and (ii) of the thesis follow by Lemma \ref{lemma:exist-approx}. The proof of (iii) is an elementary consequence of Lemma \ref{lemma:regconv}, while (iv) follows by Lemma \ref{lemma:firstvar}.
\end{proof}

\section{Two applications of the Selection Principle}\label{sect:app}

In this section we
describe two applications of Theorem
\ref{teo:SP}. The first one is a new proof of the
sharp quantitative isoperimetric inequality in $\R^n$. The second one
is a positive answer to a conjecture by Hall \cite{Hall92}, 
concerning the optimal asymptotic constant for \eqref{QII} in $\R^2$,
when the asymmetry vanishes.
\medskip

\subsection{The Sharp Quantitative Isoperimetric Inequality}

We start by recalling the definition of \textit{nearly spherical set}
introduced by Fuglede in \cite{Fuglede89} (see also \cite{Fuglede86}).
A Borel set $E$ in $\R^n$ is nearly spherical if $|E| = |B|$, the barycenter of $E$ is $0$, and $\de E$ is the
normal graph of a Lipschitz function $u:\de B \to (-1,+\infty)$ 
(i.e., $\de E = \{(1+ u(x))x:\ x\in \de B\}$) with
$\|u\|_{L^\infty(\de B)} \leq \frac{1}{20n}$ and $\|\nabla
u\|_{L^\infty(\de B)}\leq \frac 12$.
In \cite{Fuglede89} (see also \cite{Fuglede86} for a proof in
dimension $2$ and $3$) Fuglede proved the following crucial estimate,
whence the sharp quantitative isoperimetric
inequality easily follows: 
\begin{teo}[Fuglede's estimate]\label{teo:fuglede}
Let $E\subset\R^n$ be a nearly spherical set with $\de E = \{(1+ u(x))x:\ x\in \de B\}$ and $u\in
  W^{1,\infty}(\de B)$ as above, then there
exists $C=C(n)>0$ such that  
\begin{equation*}
\defP(E)\geq C\|u\|^2_{W^{1,2}(\de B)}.
\end{equation*} 
\end{teo}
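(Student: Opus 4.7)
The plan is to use the radial-graph parametrization $\theta\mapsto(1+u(\theta))\theta$ to express $\Per(E)$ and $|E|$ as explicit integrals over $\de B$, Taylor-expand them to second order in $(u,\nabla_\tau u)$, and then reduce the inequality to a spectral gap statement for the Laplace-Beltrami operator on $\de B$. The smallness hypotheses $\|u\|_{L^\infty}\le 1/(20n)$ and $\|\nabla_\tau u\|_{L^\infty}\le 1/2$ are what ensure the Taylor remainders are of strictly higher order than the principal quadratic part and can eventually be absorbed.

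A direct Jacobian computation yields
\[
\Per(E)=\int_{\de B}(1+u)^{n-2}\sqrt{(1+u)^2+|\nabla_\tau u|^2}\,d\Hau^{n-1},\qquad |E|=\frac{1}{n}\int_{\de B}(1+u)^n\,d\Hau^{n-1},
\]
where $\nabla_\tau$ denotes the tangential gradient on $\de B$. Taylor-expanding the first integrand and integrating gives
\[
\Per(E)-\Per(B)=\int_{\de B}\Bigl[(n-1)u+\frac{(n-1)(n-2)}{2}u^2+\frac12|\nabla_\tau u|^2\Bigr]d\Hau^{n-1}+R_1,
\]
with $|R_1|\le C\|u\|_{W^{1,\infty}}\|u\|_{W^{1,2}}^2$. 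The volume constraint $|E|=|B|$ expanded similarly yields $\int_{\de B}u=-\tfrac{n-1}{2}\int_{\de B}u^2+R_2$ with $|R_2|\le C\|u\|_{L^\infty}\|u\|_{L^2}^2$. Plugging this back and using the algebraic identity $\tfrac{(n-1)(n-2)}{2}-\tfrac{(n-1)^2}{2}=-\tfrac{n-1}{2}$ to collapse the $u^2$ coefficient, one arrives at
\[
\Per(E)-\Per(B)=\frac12\int_{\de B}\bigl(|\nabla_\tau u|^2-(n-1)u^2\bigr)d\Hau^{n-1}+R,\qquad |R|\le C\|u\|_{W^{1,\infty}}\|u\|_{W^{1,2}}^2.
\]

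Next I decompose $u=\sum_{k\ge 0}u_k$ into spherical harmonics, with $u_k$ in the $k$-th eigenspace of $-\Delta_{\de B}$ of eigenvalue $\lambda_k=k(k+n-2)$. Parseval gives
\[
\int_{\de B}\bigl(|\nabla_\tau u|^2-(n-1)u^2\bigr)d\Hau^{n-1}=\sum_{k\ge 0}\bigl(\lambda_k-(n-1)\bigr)\|u_k\|_{L^2}^2.
\]
Here $\lambda_0-(n-1)=-(n-1)$, $\lambda_1-(n-1)=0$, and $\lambda_k-(n-1)\ge n+1$ for $k\ge 2$, so the $k\ge 2$ modes contribute a term comparable to the full $W^{1,2}$-norm of their part of $u$. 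To neutralise the bad modes I use the two geometric constraints: the volume constraint implies $\|u_0\|_{L^2}^2\le C\|u\|_{L^\infty}^2\|u\|_{L^2}^2$, since $u_0$ is proportional to $\int_{\de B}u$; and the barycenter condition $\int_E x\,dx=0$, expanded in polar coordinates, produces $\bigl|\int_{\de B}u\,\theta\bigr|\le C\|u\|_{L^\infty}\|u\|_{L^2}^2$, which in turn bounds $\|u_1\|_{L^2}^2\le C\|u\|_{L^\infty}^2\|u\|_{L^2}^2$ because $u_1$ is spanned by coordinate functions $\theta\mapsto\langle v,\theta\rangle$. For $\|u\|_{W^{1,\infty}}$ sufficiently small (guaranteed by hypothesis on choosing the constant $20n$ adequately), the bad-mode contributions and the cubic remainder $R$ are absorbed by the positive $k\ge 2$ term, and dividing by $\Per(B)$ yields $\defP(E)\ge C(n)\|u\|_{W^{1,2}(\de B)}^2$.

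The main obstacle is not the spectral computation, which is classical, but the precise bookkeeping of the constraint-induced cancellations. The delicate feature is that the volume constraint forces the linear term $(n-1)\int u$ to combine with the quadratic term in such a way that the coefficient in front of $u^2$ lands exactly at $-(n-1)=-\lambda_1$; this is exactly the borderline value for which the $k=1$ eigenspace becomes neutral for the resulting quadratic form, and it is then the barycenter condition that precisely pins this eigenspace. Ensuring that every remainder is quantitatively of the form $\|u\|_{L^\infty}\cdot(W^{1,2}\text{-quadratic})$ rather than just $(W^{1,\infty}\text{-quadratic})$ is the computation requiring the most care.
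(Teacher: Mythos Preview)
Your proposal is correct and follows essentially the same route as the paper. The paper does not itself prove Theorem~\ref{teo:fuglede} (it is quoted from Fuglede), but it proves the slightly weaker Lemma~4.2 by exactly the method you describe: Taylor-expand the area and volume integrals to second order, use the volume constraint to convert $(n-1)\int u$ into $-\tfrac{(n-1)^2}{2}\int u^2$ so that the quadratic form becomes $\tfrac12\int(|\nabla u|^2-(n-1)u^2)$, decompose into spherical harmonics with $\lambda_k=k(k+n-2)$, and kill the $k=0$ and $k=1$ modes via the volume and barycenter constraints respectively.
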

By appealing to the Selection Principle and to the estimate above, we
could directly provide the complete proof of the sharp quantitative
isoperimetric inequality (see the proof of Theorem
\ref{teo:QII}). Instead, for the sake of completeness, in Lemma
\ref{lemma:fuglede-weak} we provide 
a slightly weaker, but still sufficient for our aims, version of Theorem \ref{teo:fuglede}. Note that its proof is obtained by
basically repeating the argument exploited by Fuglede in the proof of
Theorem \ref{teo:fuglede}.  

Let us first recall the following facts. Let $E\subset\R^n$ be such
that $\de E=\{(1+u(x)x),\ x\in\de B)\}$ for some $u:\de B\to(-1,+\infty)$ of class $C^1$, then the perimeter $\Per(E)$, the Lebesgue measure $|E|$ and the barycenter $\bary(E)$ of $E$ can be computed by exploiting the following formulas:
\begin{equation}
  \label{eq:per}
  \frac{\Per(E)}{\Per(B)} = \int_{\de B} (1+u)^{n-1} \sqrt{1 +
    (1+u)^{-2} |\nabla u|^2}\, d\sigma,
\end{equation}
\begin{equation}
  \label{eq:vol}
  \frac{|E|}{|B|} = \int_{\de B} (1+u)^n\, d\sigma,
\end{equation}
and 
\begin{equation}
  \label{eq:bar}
  \bary(E) = \int_{\de B} (1+u(x))^{n+1}\,x\,\, d\sigma(x),
\end{equation}
where we have set $\sigma =\frac{1}{\Per(B)}\Hau^{n-1}$.

\begin{lemma}[Weak form of Fuglede's estimate]\label{lemma:fuglede-weak}
Let $E\subset\R^n$ and $u:\de B\to(-1,+\infty)$ of class $C^1$ be such that $\de E=\{(1+u(x))x,\ x\in\de B)\}$, $|E|=|B|$ and $\bary(E)=0$. Then for all $\eta>0$ there exists $\e>0$ such that, if $\|u\|_{L^\infty(\de B)} + \|\nabla u\|_{L^\infty(\de B)} <\e$, it holds that
  \begin{equation}
    \label{eq:stimaFuglede}
    \defP(E) \geq \frac{(1-\eta)}2 \|u\|^2_{L^2(\de B)} + \frac 14 \|\nabla u\|^2_{L^2(\de B)}.
  \end{equation}
\end{lemma}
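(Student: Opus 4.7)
The plan is to reproduce, in slightly simplified form, Fuglede's classical argument for Theorem \ref{teo:fuglede}: Taylor-expand the perimeter, volume, and barycenter functionals of $E$ in powers of $u$; use the volume constraint to eliminate the linear term in $u$ and the barycenter constraint to kill the degree-$1$ spherical-harmonic component of $u$; and finally analyze the resulting quadratic form on $\de B$ in the Laplace--Beltrami eigenbasis.

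Starting from \eqref{eq:per} and Taylor-expanding both $\sqrt{1+(1+u)^{-2}|\nabla u|^2}$ and $(1+u)^{n-1}$ to second order in $(u,\nabla u)$, I obtain
\[
\defP(E)=(n-1)\int_{\de B}\!u\,d\sigma+\tfrac{(n-1)(n-2)}{2}\int_{\de B}\!u^2\,d\sigma+\tfrac12\int_{\de B}\!|\nabla u|^2\,d\sigma+R_1,
\]
with $|R_1|\le C\e(\|u\|_{L^2}^2+\|\nabla u\|_{L^2}^2)$ whenever $\|u\|_{L^\infty}+\|\nabla u\|_{L^\infty}<\e$. In the same vein, the constraint $|E|=|B|$ applied to \eqref{eq:vol} gives $\int u\,d\sigma=-\tfrac{n-1}{2}\int u^2\,d\sigma+O(\e\|u\|_{L^2}^2)$, and substituting this into the identity above leads, after the coefficients of $\int u^2$ combine, to
\[
\defP(E)\ge\tfrac12\int_{\de B}|\nabla u|^2\,d\sigma-\tfrac{n-1}{2}\int_{\de B}u^2\,d\sigma-C\e(\|u\|_{L^2}^2+\|\nabla u\|_{L^2}^2).
\]
Decomposing $u=a_0+\sum_{k\ge 1}u_k$ into spherical harmonics (so that $\int|\nabla u_k|^2\,d\sigma=k(k+n-2)\|u_k\|_{L^2}^2$), the quadratic form on the right-hand side equals $-\tfrac{n-1}{2}a_0^2+\tfrac12\sum_{k\ge 2}(k-1)(k+n-1)\|u_k\|_{L^2}^2$ (the degree-$1$ term vanishing identically as an expression of translation invariance), while $a_0$ is $O(\|u\|_{L^2}^2)$ by the volume constraint and $\|u_1\|_{L^2}^2=O(\e^2\|u\|_{L^2}^2)$ by the second-order Taylor expansion of the barycenter formula \eqref{eq:bar}.

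The sharpness of the constants $1/2$ and $1/4$ emerges at mode $k=2$ via the algebraic identity
\[
\tfrac{(k-1)(k+n-1)}{2}=\tfrac{1-\eta}{2}+\tfrac{k(k+n-2)}{4}+c_k,\qquad c_k:=\tfrac{k^2+(n-2)k-2(n-\eta)}{4},
\]
which yields $c_2=\eta/2$ and $c_k\ge\eta/2$ for every $k\ge 2$, with $c_k\to\infty$ as $k\to\infty$. This surplus is the only room available to absorb the error terms, so the main obstacle of the proof is the bookkeeping step of choosing $\e$ small enough (in terms of $\eta$ and $n$) so that the negative mode-$0$ contribution $-\tfrac{n-\eta}{2}a_0^2$, the mismatch $\bigl(\tfrac{1-\eta}{2}+\tfrac{n}{4}\bigr)\|u_1\|_{L^2}^2$, and the cubic remainder $C\e(\|u\|_{L^2}^2+\|\nabla u\|_{L^2}^2)$ are all dominated by $\sum_{k\ge 2}c_k\|u_k\|_{L^2}^2$. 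This absorption succeeds because $\sum_{k\ge 2}\|u_k\|_{L^2}^2\ge (1-O(\e^2))\|u\|_{L^2}^2$ (since the $a_0$ and $u_1$ modes are themselves of order $\e^2\|u\|_{L^2}^2$) and because $k(k+n-2)\le C(n,\eta)c_k$ for all $k\ge 2$, which controls $\|\nabla u\|_{L^2}^2$ by $C(n,\eta)\sum_{k\ge 2}c_k\|u_k\|_{L^2}^2$ up to a small $u_1$-contribution. Choosing finally $\e=\e(\eta,n)$ small enough that all of these error constants are compensated yields \eqref{eq:stimaFuglede}.
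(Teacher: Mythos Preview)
Your proposal is correct and follows essentially the same route as the paper: Taylor-expand the perimeter and volume, use the volume constraint to eliminate the linear term, use the barycenter constraint to control the first spherical-harmonic mode, and then exploit the spectral gap at $k=2$ (where $\lambda_2=2n$) to extract the coefficients $\tfrac{1-\eta}{2}$ and $\tfrac14$. The only cosmetic difference is in the final splitting: the paper writes $\lambda_k-(n-1)=\tfrac{\lambda_k}{2}+(\tfrac{\lambda_k}{2}-n+1)$ and uses $\tfrac{\lambda_k}{2}-n+1\ge 1$ for $k\ge 2$, whereas you factor $(k-1)(k+n-1)$ and track the explicit surplus $c_k\ge\eta/2$; your bookkeeping of how $C\e\|\nabla u\|_{L^2}^2$ is absorbed via $\lambda_k\le C(n,\eta)c_k$ is in fact slightly more careful than the paper's, which is a bit terse at that point. (Minor slip: the $u_1$-mismatch coefficient should be $\tfrac{1-\eta}{2}+\tfrac{n-1}{4}$ since $\lambda_1=n-1$, not $\tfrac{n}{4}$, but this is inconsequential.)
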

\begin{proof}
By applying Taylor's formula in \eqref{eq:per}, and thanks to the
bound on the sum of the $L^\infty$-norms of $u$ and $\nabla u$, we have that
\begin{equation}
  \label{eq:taylor1}
  \begin{split}
\frac{\Per(E)}{\Per(B)}= \int_{\de B}\left(1 + \frac{|\nabla
    u|^2}{2} \right.& \left.+ (n-1)u + \frac{(n-1)(n-2)}{2} u^2\right) \ d\sigma \\ 
  & + O(\e)(\|u\|^2_{L^2(\de B)}+\|\nabla u\|^2_{L^2(\de B)}).    
  \end{split}
\end{equation}
By the hypothesis $|E| = |B|$, which is equivalent to 
$\int_{\de B} ((1+u)^n - 1)\ d\sigma = 0$, it turns out, again by Taylor's formula, that
\begin{equation}
  \label{eq:taylor2}
  \int_{\de B} u \ d\sigma= 
  -\left(\frac{n-1}{2} + O(\e)\right) \|u\|^2_{L^2(\de B)}.
\end{equation}
Combining \eqref{eq:taylor1} and \eqref{eq:taylor2} we get
\begin{equation}
  \label{eq:stima1}
  \begin{split}
    \defP(E)
&= \int_{\de B} \left(\frac{|\nabla
    u|^2}{2} + (n-1)u + \frac{(n-1)(n-2)}{2} u^2\right)\ d\sigma + O(\e)(\|u\|^2_{L^2(\de B)}+\|\nabla u\|^2_{L^2(\de B)})\\ 
&= \frac 12 \int_{\de B} (|\nabla u|^2 - (n-1)u^2)\ d\sigma + O(\e)(\|u\|^2_{L^2(\de B)}+\|\nabla u\|^2_{L^2(\de B)}).
  \end{split}
\end{equation}
Thanks to the previous estimate, in order to prove the thesis it is only left to prove that, for all $\eta>0$  
\begin{equation}
  \label{eq:stima2}
\|\nabla u\|^2_{L^2(\de B)} - (n-1)\|u\|^2_{L^2(\de B)} \geq (1-\eta)\|u\|^2_{L^2(\de B)} + \frac 12 \|\nabla u\|^2_{L^2(\de B)}  
\end{equation}
for $\e>0$ small enough. To this end, it will be sufficient to consider the Fourier series of $u$ over the orthonormal basis of
spherical harmonics $\{Y_k:\ k=0,1,\dots\}$, namely
\begin{equation*}
u = \sum_{k=0}^\infty a_k Y_k,
\end{equation*}
and estimate the first two coefficients $a_0$ and $a_1$. We start by recalling that 
\begin{equation} \label{eq:Y0Y1}
  Y_0 = 1,\qquad Y_1(x) = x\cdot \nu
\end{equation}
for a suitably chosen $\nu\in \R^n$. Thus the first two coefficients
$a_0, a_1$ of the Fourier expansion of $u$ are given by
\begin{equation*}
a_0 = \int_{\de B} u Y_0\, d\sigma= \int_{\de B} u\, d\sigma \quad\text{and}\quad a_1 = \int_{\de B} u Y_1\, d\sigma = \int_{\de B} u(x) x\cdot \nu\, d\sigma.
\end{equation*}
We first estimate $a_0$. Taking into account that $\|u\|_{L^\infty(\de
B)}<\e$, we have that
\begin{equation}
  \label{eq:est0}
a_0^2 = O(\e^2) \|u\|^2_{L^2(\de B)}.
\end{equation}
We now estimate $a_1$. Observing that, by \eqref{eq:Y0Y1} and by the hypothesis $\bary(E)=0$ 
\[
\int_{\de B} Y_1\, d\sigma = 0,
\]
and that 
\[
\int_{\de B} (1+u)^{n+1} Y_1\, d\sigma = \bary(E) \cdot \nu = 0
\]
we first obtain that
\begin{equation}
  \label{eq:stimaY1}
\int_{\de B} ((1+u)^{n+1} - 1)Y_1\, d\sigma = \int_{\de B} \left((n+1)u + \sum_{k=2}^{n+1}
\binom{n+1}{k} u^k\right)Y_1\, d\sigma = 0.
\end{equation}
Then, from \eqref{eq:stimaY1} we derive 
\[
a_1 = \int_{\de B} uY_1\, d\sigma = -\sum_{k=2}^{n+1}\binom{n}{k} \int_{\de B} u^k\, d\sigma = O(\|u\|^2_{L^2(\de B)})
\]
and
\begin{equation}
  \label{eq:est1}
a_1^2 = O(\e^2) \|u\|^2_{L^2(\de B)}.  
\end{equation}
Since 
\begin{equation}
 \label{eq:u2g2}
 \|u\|^2_{L^2(\de B)} = \sum_{k=0}^\infty a_k^2\quad \text{and}\quad \|\nabla u\|^2_{L^2(\de B)} =
\sum_{k=1}^\infty \lambda_k a_k^2,
\end{equation}
where 
\begin{equation}
  \label{eq:lambdak}
\lambda_k = k(k+n-2)  
\end{equation}
denotes the $k$-th eigenvalue of the Laplace-Beltrami operator on $\de
B$ (relative to the $k$-th eigenfunction 
$Y_k$), on gathering together \eqref{eq:est0} and \eqref{eq:est1} we obtain
\begin{equation}
  \label{eq:est2}
\|u\|^2_{L^2(\de B)} \leq (1+O(\e^2))\sum_{k=2}^\infty a_k^2,\qquad \|\nabla u\|^2_{L^2(\de B)}
\leq (1+O(\e^2))\sum_{k=2}^\infty \lambda_k a_k^2.
\end{equation}
As a consequence of \eqref{eq:lambdak} and \eqref{eq:est2}, the
left-hand side of \eqref{eq:stima2} can be estimated as 
follows:
\begin{equation}
  \label{eq:stima3}
  \begin{split}
  \int_{\de B} (|\nabla u|^2 - (n-1)u^2)\, d\sigma &= \sum_{k=2}^\infty (\lambda_k - n+1)
  a_k^2 + O(\e^2) \|u\|^2_{L^2(_{\de B})}\\ 
&= \frac 12 \|\nabla u\|^2_{L^2(_{\de B})} + \sum_{k=2}^\infty (\lambda_k/2 - n+1)
  a_k^2 + O(\e^2) \|u\|^2_{L^2(_{\de B})}\\ 
&\geq \frac 12 \|\nabla u\|^2_{L^2(_{\de B})} + (1-O(\e^2))\|u\|^2_{L^2(_{\de B})},
  \end{split}
\end{equation}
which in turns imply the desired estimate \eqref{eq:stima2}, and hence
the thesis of the lemma.  
\end{proof}

We are now ready to prove the main result of the section:

\begin{teo}[The Sharp Quantitative Isoperimetric Inequality in $\R^n$]\label{teo:QII}
There exists a positive constant $C$ such that, for any $E\in\Se^n$ it holds
\begin{equation}\label{QII_3}
\defP(E)\geq C\asym(E)^2.
\end{equation}
\end{teo}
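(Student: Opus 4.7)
The plan is to combine the Selection Principle (Theorem \ref{teo:SP}) with the weak form of Fuglede's estimate (Lemma \ref{lemma:fuglede-weak}) to prove that $Q(B)>0$, which by the discussion preceding \eqref{QBm0} and Lemma \ref{lemma:smallness} is equivalent to the sharp quantitative isoperimetric inequality.

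First, I reduce to the small asymmetry regime. By Lemma \ref{lemma:smallness}, it suffices to exhibit $\alpha_0>0$ and $C_0>0$ so that $\defP(E)/\asym(E)^2\geq C_0$ whenever $0<\asym(E)<\alpha_0$; equivalently, it suffices to show $Q(B)>0$. Next, I invoke the Selection Principle to produce a sequence $(E_j)_j$ satisfying properties (i)--(iv). Using the translation invariance of $\defP$ and $\asym$ (hence of $Q$), I replace each $E_j$ with a suitable translate so that $\bary(E_j)=0$; since $u_j\to 0$ in $C^1(\de B)$ by (iii), one has $\bary(E_j+\R^n)\to 0$ and the required translations tend to zero, so that after translation one can still represent $\de E_j$ as a normal graph $\{(1+\tilde u_j(x))x\}$ with $\tilde u_j\to 0$ in $C^1(\de B)$.

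For $j$ large the hypothesis of Lemma \ref{lemma:fuglede-weak} is satisfied, which yields, for any fixed $\eta\in(0,1)$ and all sufficiently large $j$,
\begin{equation*}
\defP(E_j)\geq \frac{1-\eta}{2}\,\|\tilde u_j\|_{L^2(\de B)}^2.
\end{equation*}
I then estimate $\asym(E_j)$ in terms of $\|\tilde u_j\|_{L^2(\de B)}$ by writing, in polar coordinates,
\begin{equation*}
|E_j\difsim B|=\int_{\de B}\left|\int_1^{1+\tilde u_j(x)}r^{n-1}\,dr\right|d\Hau^{n-1}(x)\leq 2\,\|\tilde u_j\|_{L^1(\de B)}
\end{equation*}
for $j$ large, so that by Cauchy--Schwarz $|E_j\difsim B|\leq C_n\|\tilde u_j\|_{L^2(\de B)}$ and consequently $\asym(E_j)\leq C_n'\|\tilde u_j\|_{L^2(\de B)}$. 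Combining with the Fuglede bound gives $Q(E_j)=\defP(E_j)/(\Per(B)\asym(E_j)^2)\geq c(n)>0$ uniformly in $j$, and passing to the limit via property (ii) of the Selection Principle yields $Q(B)\geq c(n)>0$.

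The main obstacle I expect is the barycenter adjustment: Fuglede's estimate crucially requires $\bary(E_j)=0$, whereas the Selection Principle only produces sets converging in $C^1$ to the unit ball centered at the origin. The step that needs care is therefore showing that an arbitrarily small translation restores the barycenter condition while preserving the representation of $\de E_j$ as a graph over $\de B$ with $C^1$-small profile; this follows from the implicit function theorem applied to the map $(x,t)\mapsto(1+u_j(x))x-t$ for small translations $t$, but it is the only non-formal step of the argument. Everything else is a direct application of the already-established tools.
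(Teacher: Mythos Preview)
Your proposal is correct and follows essentially the same route as the paper: reduce to $Q(B)>0$, invoke the Selection Principle, recentre so that $\bary(E_j)=0$, apply Fuglede's estimate, and bound $\asym(E_j)$ by $\|u_j\|_{L^2(\de B)}$ via a polar-coordinate (equivalently, Bernoulli plus H\"older) computation. The barycenter step you flag as delicate is treated in the paper with exactly the same informal replacement $E_j\mapsto E_j-\bary(E_j)$; note also the harmless slip in your formula for $Q(E_j)$ --- by definition $Q(E_j)=\defP(E_j)/\asym(E_j)^2$, with no extra factor of $\Per(B)$.
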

\begin{proof}

We claim that 
\begin{equation}\label{eq:QBpositive}
Q(B)>0.
\end{equation}
Suppose the claim proved. Then, by definition of $Q(B)$, there exists
$\alpha_0>0$ such that,  
for all $E\in\Se^n$ with $\asym(E)<\alpha_0$ it holds that 
\begin{equation}\label{teo:stima1}
Q(E)\geq \frac{Q(B)}{2}.
\end{equation}
If otherwise $E$ is such that $\alpha_0\leq\asym(E)<2$, then by Lemma \ref{lemma:smallness} there exists $\delta_0>0$ such that $\defP(E)\geq\delta_0$, 
which implies
\begin{equation}\label{teo:stima2}
Q(E)=\frac{\defP(E)}{\asym(E)^2}\geq \frac{\delta_0}{4}.
\end{equation}
On combining \eqref{teo:stima1} and \eqref{teo:stima2}, we obtain \eqref{QII_3} by choosing $C=\min\{\frac{Q(B)}{2},\frac{\delta_0}{4}\}$.

We are thus left with the proof of \eqref{eq:QBpositive}. To compute
$Q(B)$, we will exploit the sequence $(E_j)_j\subset\Se^n$ provided by
the Selection Principle (Theorem \ref{teo:SP}). Since  $\bary(E_j)\to
0$ as $j\to\infty$, without loss of generality (that is, up to replacing $E_j$ by the sequence $E_j-\bary(E_j)$) we may suppose that $E_j$ fulfills the hypotheses of Lemma \ref{lemma:fuglede-weak}. This in particular implies the existence of $j_0>0$ such that, for all $j\geq j_0$
\begin{equation}
    \label{eq:stimaFuglede-1}
    \defP(E_j) \geq \frac{1}4 \|u_j\|^2_{L^{2}(\de B)}\ .
\end{equation}
By applying Bernoulli and H\"older inequalities, we find two positive
dimensional constants $c_0$ and $c_1$ such that 
\begin{eqnarray*}
\asym(E_j)^2\leq 4|E_j\setminus B|^2&\leq& c_0\left(\int_{\de B\cap(E\setminus B)}(1+u_j)^n-1\, d\Hau^{n-1}\right)^2\\
&\leq& c_0\ n^2\left(\int_{\de B}|u_j|\, d\Hau^{n-1}\right)^2\leq c_1\|u_j\|^2_{L^2(\de B)},
\end{eqnarray*}
Therefore, the estimate \eqref{eq:stimaFuglede-1} implies that, for all $j\geq j_0$,
$$
Q(E_j)\geq C>0
$$
with $C=\frac{1}{4c_1}>0$. The claim then follows, on passing to the
limit in the left hand side of the previous inequality and on recalling
that $Q(E_j)\to Q(B)$ as $j\to\infty$.
\end{proof}

\begin{oss}\rm
It is worth noticing that, by the definition of $Q(B)$, for any $E\in\Se^n$ the following estimate holds true:
\begin{equation*}
\defP(E)\geq Q(B)\asym(E)^2+o(\asym(E)^2).
\end{equation*}
In other words, $Q(B)$ is the best (asymptotic) constant in the sharp
  isoperimetric inequality in $\R^n$, as the asymmetry converges to
  zero. Thanks to the Selection Principle, the computation of $Q(B)$
  becomes an affordable task, as the class of sets which actually
  play a role in this computation is quite restricted. In the next subsection it will be shown that in $\R^2$ it holds $Q(B)=\frac{\pi}{8(4-\pi)}$.
\end{oss}

\subsection{The best constant for the quantitative isoperimetric inequality in $\R^2$ in the small asymmetry regime}

In this section, and more precisely in Theorem \ref{teo:Hconj}, we positively answer to a conjecture posed by Hall in \cite{Hall92} asserting that, for any measurable set in $\R^2$ with positive and finite Lebesgue measure, the following estimate holds true:
\begin{equation}
  \label{eq:HHWconj-2}
 \defP(E) \geq C_0 \asym(E)^2 + o(\asym(E))^2,
\end{equation}
with $C_0=\frac{\pi}{8(4-\pi)}$ optimal. \\
We start by recalling a result conjectured in \cite{HalHayWei91} Section V, and proved in \cite{HalHay93} Theorem 1:
\begin{teo}[Hall-Hayman-Weitsmann]\label{teo:HHW91} Let $E\in\Se^2$ be a
  convex set, then \eqref{eq:HHWconj-2} holds true. 
\end{teo}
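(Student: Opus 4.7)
The plan is to parametrize $\partial E$ in polar coordinates and reduce the statement to an asymptotic extremal problem on a class of real functions on $\de B$. After translating and scaling so that $|E|=\pi$ and $\bary(E)=0$, a sequence of convex sets $(E_k)_k$ with $\asym(E_k)\to 0$ Hausdorff-converges to $B$ by the Blaschke selection theorem, so for $k$ large $\partial E_k$ admits a polar representation
\[
\de E_k=\{(1+u_k(\theta))(\cos\theta,\sin\theta):\theta\in[0,2\pi)\}
\]
with $\|u_k\|_\infty$ small. Convexity provides the distributional inequality $(1+u)^2+2(u')^2-(1+u)u''\geq 0$, which at leading order reads $1+2u-u''\geq 0$ and in particular rules out rapid high-frequency oscillations of $u$.

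Next I would Fourier-expand $u=a_0+\sum_{m\geq 1}(a_m\cos m\theta+b_m\sin m\theta)$ and apply \eqref{eq:per}--\eqref{eq:bar}. The area constraint fixes $a_0$ at second order, while $\bary(E)=0$ forces $a_1,b_1=o(\|u\|_{L^2})$. A standard Taylor expansion (analogous to the one in the proof of Lemma \ref{lemma:fuglede-weak}) then gives
\[
\defP(E)=\frac{1}{4\pi}\sum_{m\geq 2}(m^2-1)(a_m^2+b_m^2)+o(\|u\|_{W^{1,2}}^2),
\]
and since the $m=1$ modes vanish the asymmetry is asymptotically realized at the ball $B$ itself, so
\[
\asym(E)=\frac{1}{\pi}\int_0^{2\pi}|u(\theta)|\,d\theta+o(\|u\|_{L^1}).
\]

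It remains to minimize $\defP(E)/\asym(E)^2$ over convex admissible perturbations. A rearrangement-type argument exploiting the convexity constraint together with the monotonicity of the Fourier weights $m^2-1$ reduces the extremizers to a one-parameter family of ``circular-cap'' perturbations $u_\phi$, obtained by cutting $B$ along a chord subtending angle $2\phi$ and gluing back a slightly flatter circular arc. Evaluating the ratio on this family in closed form and passing to the limit $\phi\to 0^+$ produces the sharp constant $\pi/(8(4-\pi))$, the factor $4-\pi$ arising from the explicit second-moment integral of a centered cap; matching sharpness is provided by the same sequence. The main obstacle is the rearrangement step: since the $L^1$ norm appearing in $\asym$ is not diagonal in the Fourier basis, reducing generic convex extremizers to the cap family requires a delicate combination of convexity-preserving symmetrization and Fourier monotonicity estimates.
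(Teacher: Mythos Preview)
The paper does not prove Theorem~\ref{teo:HHW91} at all: it is quoted as a result \emph{conjectured} in \cite{HalHayWei91} and \emph{proved} in \cite{HalHay93}, and is used as a black box in the proof of Theorem~\ref{teo:Hconj}. So there is no ``paper's own proof'' to compare your proposal against; you are sketching an argument for a cited external result.

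As for the sketch itself, the setup (polar parametrization, Fourier expansion, the second-order formula for $\defP$ in terms of the modes $m\geq 2$, and the identification of circular-cap competitors as the source of the constant $\pi/(8(4-\pi))$) is sound and is indeed the natural heuristic. But the step you yourself flag as ``the main obstacle'' is the entire content of the theorem: showing that among convex perturbations with a given value of $\asym$ the caps asymptotically minimize $\defP$. Because $\asym$ is an $L^1$-type, non-Hilbertian functional of $u$ (and the optimal center in its definition need not be the barycenter), there is no orthogonality to exploit, and the vague phrase ``convexity-preserving symmetrization and Fourier monotonicity estimates'' does not correspond to any standard tool that carries this through. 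Until that reduction is actually performed, the argument is a plausibility outline rather than a proof. For the record, the proof in \cite{HalHay93} proceeds by rather different, complex-analytic methods (the paper is titled ``A problem in the theory of subordination''), not by a real-variable rearrangement of the kind you describe.
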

As an immediate consequence of the Selection Principle and of the
above theorem, we now prove \eqref{eq:HHWconj-2}.
\begin{teo}[Hall's conjecture]\label{teo:Hconj}
Let $E\in\Se^2$. Then \eqref{eq:HHWconj-2} holds true.
\end{teo}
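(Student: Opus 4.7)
The plan is to prove the stronger equivalent assertion $Q(B)\ge C_0$ in $\R^2$ by exploiting the recovery sequence produced by the Selection Principle, and then to read off \eqref{eq:HHWconj-2} from the definition of $Q$ as a lower semicontinuous envelope. So I would start by applying Theorem \ref{teo:SP} to obtain a sequence $(E_j)_j\subset \Se^2$ with $|E_j|=|B|$, $\asym(E_j)\to 0$, $Q(E_j)\to Q(B)$, $\de E_j$ converging to $\de B$ in $C^1$, and $\|H_j-1\|_{L^\infty(\de E_j)}\to 0$. Notice that by Remark \ref{rmk:regularity}, in dimension $n=2$ the boundary $\de E_j$ is automatically $C^{1,1}$, so its classical curvature is defined $\Hau^1$-a.e.

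The key geometric observation is that for $j$ large the set $E_j$ is convex. Indeed, by property (iii) of the Selection Principle $\de E_j$ is, for large $j$, a simple closed $C^1$ curve arbitrarily close to $\de B$; by property (iv) and the $C^{1,1}$-regularity, its scalar curvature (with the sign induced by the inner normal) is bounded below by $1-\|H_j-1\|_{L^\infty(\de E_j)}>0$ for $j\ge j_0$. A simple closed planar $C^{1,1}$ curve whose signed curvature is bounded below by a positive constant is the boundary of a convex domain (the unit tangent has strictly monotone rotation along the curve), hence $E_j$ is convex for every $j\ge j_0$.

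With convexity in hand, I would apply the Hall–Hayman–Weitsman inequality (Theorem \ref{teo:HHW91}) to $E_j$ for $j\ge j_0$, which gives
\begin{equation*}
\defP(E_j)\ \ge\ C_0\,\asym(E_j)^2+o(\asym(E_j)^2)\qquad\text{as }j\to\infty.
\end{equation*}
Dividing by $\asym(E_j)^2>0$ (property (i)) and using property (ii), I would then pass to the limit:
\begin{equation*}
Q(B)=\lim_{j\to\infty} Q(E_j)\ =\ \lim_{j\to\infty}\frac{\defP(E_j)}{\asym(E_j)^2}\ \ge\ C_0.
\end{equation*}

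Finally, to deduce \eqref{eq:HHWconj-2} for every $E\in\Se^2$, I would argue by contradiction: if the asymptotic inequality failed, there would exist $\eta>0$ and a sequence $(F_k)_k\subset\Se^2$ with $\asym(F_k)\to 0^+$ and $\defP(F_k)\le (C_0-\eta)\asym(F_k)^2$. Translating each $F_k$ so that its asymmetry is realised by $B$ itself, we have $|F_k\difsim B|=\omega_n\asym(F_k)\to 0$, so $(F_k)_k$ is an admissible sequence in the definition \eqref{eq:Q} of $Q(B)$, yielding $Q(B)\le \liminf_k Q(F_k)\le C_0-\eta$, which contradicts the lower bound just obtained. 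The main obstacle I anticipate is the convexity step, since it requires combining two pieces of regularity (the $C^1$ global closeness to $\de B$ and the $L^\infty$ control on the curvature) with the two-dimensional fact that positive curvature forces convexity; once this is established, the remainder of the argument is essentially a passage to the limit that the Selection Principle was designed to make routine.
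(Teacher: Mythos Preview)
Your proposal is correct and follows essentially the same route as the paper's proof: use the Selection Principle to produce a recovery sequence $(E_j)_j$ for $Q(B)$, use property (iv) to force convexity of $E_j$ for large $j$, invoke Theorem~\ref{teo:HHW91} on the convex $E_j$, and pass to the limit to obtain $Q(B)\ge C_0$, from which \eqref{eq:HHWconj-2} follows by the definition of $Q$. You supply more detail than the paper on two points (the $C^{1,1}$ regularity justifying the convexity argument in $\R^2$, and the contradiction argument reading \eqref{eq:HHWconj-2} off from $Q(B)\ge C_0$), but the strategy is identical.
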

\begin{proof}
By (iv) in Theorem \ref{teo:SP}, there exists a sequence of sets $(E_j)_j\subset\Se^2$ such that 
\begin{equation}\label{eq:ivSP}
Q(E_j)\to Q(B)\quad\text{and}\quad \|H_j-1\|_{L^{\infty}(\de E_j)}\to 0,
\end{equation} 
where $H_j$ stands for the curvature of $\de E_j$. This in particular
implies the existence of $j_0>0$, such that $E_j$ is a convex set for
all $j\geq j_0$ . By Theorem \ref{teo:HHW91} we have
\begin{eqnarray*}
Q(E_j)\geq C_0+o(1).
\end{eqnarray*}
Passing to the limit as $j$ tends to $\infty$, and thanks to \eqref{eq:ivSP}, we eventually get $Q(B)\geq C_0$ which in turn implies \eqref{eq:HHWconj-2} by the definition of $Q(B)$.
\end{proof}

\bibliographystyle{siam}

\end{document}